\DeclareMathOperator*{\FFT}{FFT}
\DeclareMathOperator*{\IFFT}{IFFT}
\DeclareMathOperator*{\sinc}{sinc}
\crefname{hypothesis}{Hypothesis}{Hypotheses}
\title{Fourier-Informed Knot Placement Schemes for B-Spline Approximation%
\thanks{Submitted to the editors DATE.
		\funding{This work is supported by the
		U.S. Department of Energy, Office of Science, Advanced Scientific Computing Research under Contract DE-AC02-06CH11357, and the Exascale
		Computing Project (Contract No. 17-SC-20-SC), a collaborative effort of the U.S. Department of Energy Office of Science and the National Nuclear Security Administration.}}
}
\author{David Lenz\thanks{Argonne National Laboratory, Lemont, IL 60439, USA
			(\email{dlenz@anl.gov})}
	\and Oana Marin\thanks{Argonne National Laboratory, Lemont, IL 60439, USA
			(\email{oanam@mcs.anl.gov})}
	\and Vijay Mahadevan\thanks{Argonne National Laboratory, Lemont, IL 60439, USA
			(\email{mahadevan@anl.gov})}
	\and Raine Yeh\thanks{Purdue University, West Lafayette, IN 47907, USA
			(\email{yeh10@purdue.edu})}
	\and Tom Peterka\thanks{Argonne National Laboratory, Lemont, IL 60439, USA
			(\email{tpeterka@mcs.anl.gov})}
}
\begin{document}
\maketitle

\begin{abstract}
	Fitting B-splines to discrete data is especially challenging when the given data contain noise, jumps, or corners. Here, we describe how periodic data sets with these features can be efficiently and robustly approximated with B-splines by analyzing the Fourier spectrum of the data. Our method uses a collection of spectral filters to produce different indicator functions that guide effective knot placement. In particular, we describe how spectral filters can be used to compute high-order derivatives, smoothed versions of noisy data, and the locations of jump discontinuities. Our knot placement method can combine one or more of these indicators to place knots that align with the qualitative features of the data, leading to accurate B-spline approximations without needing many knots. The method we introduce is direct and does not require any intermediate B-spline fitting before choosing the final knot vector. Aside from a fast Fourier transform to transfer to and from Fourier space, the method runs in linear time with very little communication. The method is applied to several test cases in one and two dimensions, including data sets with jump discontinuities and noise. These tests show that the method can fit discontinuous data without spurious oscillations and remains accurate in the presence of noise.
\end{abstract}

\begin{keyword}
	B-spline, knot placement, spline approximation, spectral filter, discontinuous spline
\end{keyword}

\begin{AMS}
	41A15, 65D10, 65D15
\end{AMS}

\section{Introduction}
B-spline curves are a class of continuous functions used extensively in the approximation of discrete data sets, particularly in geometric modeling~\cite{lin2018geometric} and isogeometric analysis (IGA)~\cite{hughes2005iga}.  These functions possess several important properties making them useful for approximating geometric curves and surfaces, and leading to their use by cutting-edge engineering~\cite{rhino3d} and visualization~\cite{glvis-tool} software. In recent years, B-splines (and their generalization, non-uniform rational basis splines, or NURBS) have been used to approximate and represent large-scale scientific data sets, which may not be easily recognized as surfaces, solids, or the like. A recent advance in this direction is the application of B-spline functional approximation to model data produced by scientific simulations in high-performance computing (HPC) settings~\cite{peterka_ldav18}. This approach has been extended to NURBS curves as well~\cite{nashed2019}.

The approximation of scientific data sets by splines requires consideration of several details sometimes de-emphasized in the computer-aided design (CAD) and visualization literature. For instance, spline representations of scientific data must be accurate enough for subsequent data analysis and postprocessing; this level of accuracy may be higher than what is required by geometric applications. Furthermore, data sets may be high-dimensional, contain complex combinations of smooth and non-smooth behavior, or be represented on scattered data. A common problem when modeling large, complex data sets is the lack of any prior knowledge regarding discontinuities, interfaces, or other qualitative features of the data. In order to efficiently approximate scientific data sets with exceedingly low error, B-spline approximation schemes must account for distinctive features within the data automatically.

B-spline curves are piecewise-polynomial $C^k$ functions (for some prescribed $k \geq 0$) by definition; as a result, the accuracy of a B-spline approximation depends on the location of its piecewise-polynomial breakpoints, or ``knots.'' However, optimal knot placement is a challenging problem. The location of spline knots must be specified before the model fitting begins, but choosing the optimal set of locations amounts to a nonlinear minimization problem with many local extrema~\cite{jupp1978free}. In practice, two categories of numerical schemes have been employed to choose knot locations: iterative methods and direct methods. Broadly speaking, iterative methods repeatedly fit B-splines on different knot distributions until certain criteria are satsified. These methods can guarantee error bounds, but do not have a bounded time complexity. For instance, an iterative scheme based on interval bisection was proposed by Liang et al.~\cite{liang2017}. Another approach was taken by Galvez et al.~\cite{galvez2015}, where genetic algorithms were applied to choose optimal knots locations. On the other hand, direct (or ``one-shot'') methods employ heuristics to choose knot locations and typically run faster than iterative schemes. Direct methods can be used to choose knots in a standalone manner, or to produce an initial guess for knot locations that serves as input to an iterative scheme.

In this paper, we propose a direct knot placement scheme that uses the Fourier spectrum of the input data to make intelligent knot placement choices. From the Fourier coefficients, we can accurately compute high-order derivatives, the locations of jump discontinuities, and (when the input is corrupted by noise) a smoothed version of the input data. Depending on the application, one or more of these indicators can be used to choose a knot vector for the B-spline approximation. For example, given a data set which is known to be smooth, high-order derivatives can be used to carry out the method of Yeh et al.~\cite{yeh2020knot}. Furthermore, using Fourier coefficients to compute the locations of discontinuities allows us to handle data that is only piecewise-smooth, which was not straightforward in the original formulation by Yeh et al. Crucially, the locations where the data breaks into smooth pieces need not be known ahead of time. Additionally, when a data set is corrupted by noise, a smoothed proxy for the input can be inferred directly from the Fourier spectrum and need not be computed explicitly. Then, a knot vector for the noisy data can be chosen based on the derivatives of the smooth proxy. Since certain operations (such as convolution, shifting, etc.) are notably simplified in Fourier space in contrast with their real space counterparts, complex preprocessing steps become natural in Fourier space.  The overall time complexity for this method is $O(N\,\log N)$; however, the entirety of the method apart from a fixed number of fast Fourier transforms runs in linear time.

A number of other notable heuristics have been developed to guide direct knot placement methods. Li et al.~\cite{li2004} used discrete curvature and angular deflection as a way to choose knot locations. In their approach, the authors computed an integrated discrete curvature function and placed knots such that each sequential pair of knots covered equal areas of discrete curvature. They also applied a smoothing procedure to compute the curvature of noisy data. This approach was recently extended from curvature to high-order derivatives by Yeh et al.~\cite{yeh2020knot}, whereby the $q^{th}$ derivative is integrated and used to place knots ($q$ being the order of the approximating spline). They showed that derivatives higher than second order (i.e., curvature) provide a robust heuristic when higher-order splines are considered. However, techniques for handling noisy data were not discussed. Michel and Zidna~\cite{michel2020} also recently considered the use of high-order derivatives to choose knot locations. Their method blends multiple heuristics including curvature, angular deflection, and derivatives into one composite function. The full algorithm considers 231 combinations of the various heuristics and generates a knot distribution for each combination. After fitting a B-spline with each knot distribution, the algorithm returns the approximation with minimal $L^2$ error.

The recent works of Yeh et al.~\cite{yeh2020knot} and Michel and Zidna~\cite{michel2020} illustrate the power of using high-order derivatives as a knot placement heuristic. The method of Michel and Zidna is robust, fitting data with jumps and corners. However, this procedure is relatively slow compared with that of Yeh et al., which performs efficiently on data sets larger than those considered by Michel and Zidna. In the context of modeling data for scientific applications, an ideal scheme would be robust enough to account for features that typically arise in scientific data sets (like noise, corners, and jumps) while still running efficiently enough to execute within high-performance computing workflows. 

Throughout this paper, we also make a point to emphasize how a Fourier-informed approach to B-spline fitting allows multiple indicators to be used in tandem to produce better approximations. For instance, combining the techniques presented here, we can intelligently choose a knot vector to fit a data set with jumps and spikes that is also corrupted by noise. At the mathematical level, each operation that we present can be realized as a filter applied to the Fourier spectrum of the input. As a result, the computation of various indicators follow the same pattern: first, compute the Fourier transform of the input; second, apply some filter to the Fourier coefficients (via a pointwise vector product); third, take the inverse Fourier transform of the filtered spectrum. Computing derivatives, jump locations, and smooth proxies all follow this workflow; obtaining the desired indicator simply amounts to choosing the right filter. 

The remainder of the paper is structured as follows. In \cref{sec:math-background}, we present basic mathematical methods and definitions that will be used in the remainder of the paper. This includes an error analysis of derivative-informed knot placement not presented before. \Cref{sec:method} describes the specifics of the proposed method in detail. Within \cref{sec:method}, each subsection defines a different filtering scheme and the scenarios in which it may be applied. In \cref{sec:numerical}, we present the results of numerical experiments that validate the behavior of the methods presented in \cref{sec:method} and compare with an existing method. Extensions to our method are discussed in \cref{sec:future}, and conclusions are presented in \cref{sec:conclusion}.

\section{Mathematical Preliminaries}\label{sec:math-background}
At the core of every B-spline approximation is a knot vector defining how the piecewise-polynomial function transitions from one piece to the next. Splines constructed from a knot vector incorporating the qualitative features of the data are generally more accurate than those based on uniform knot vectors. As such, it is important to understand how spline approximation accuracy depends on the knot vector and how this dependence may be exploited to construct near-optimal knot vectors. 

To begin, we set out some basic definitions of B-splines and knots in \cref{ssec:spline-rep}. In \cref{ssec:deriv-method}, theoretical results on the approximatin power of splines are applied to provide a new justification for the method of Yeh et al.~\cite{yeh2020knot} that was not included in their paper. Fundamental definitions and operations from Fourier theory are then described in \cref{ssec:fourier-theory}.

\subsection{B-spline Representation}\label{ssec:spline-rep}
We introduce basic definitions and properties for the one-dimensional case, but higher-dimensional extensions follow using tensor product splines. As a result, each of the fundemental definitions in higher dimensions can be described in terms of their one-dimensional analogues. A more in-depth exposition of tensor product splines may be found in the work of Habermann and Kindermann~\cite{habermann2007multi}.

A one-dimensional B-spline curve in $\mathbb{R}^d$ with $n$ control points is a parameterized curve
\begin{equation}\label{eq:spline-def}
\mathbf{C}(u) = \sum_{j=0}^{n-1} N_{j,p}(u) \mathbf{P_j},
\end{equation}
where each $N_{j,p}$ is a piecewise-polynomial function of degree $p$ and each $\mathbf{P_j} \in \mathbb{R}^d$ is a ``control point'' in $d$-dimensional space. We say that $\mathbf{C}$ is a B-spline curve of ``degree $p$'' or (equivalently) ``order $p+1$.'' The order of a B-spline is defined to be $q = p+1$, and this parameter is frequently used in the numerical analysis of splines. 

The functions $N_{j,p}$ are the B-spline basis functions and are defined on the parameter space $[0,1] \subset \mathbb{R}$. This parameter space is divided into subdomains by a nondecreasing sequence of knots $k_0,k_1,\ldots,k_{n+p} \in [0,1]$; the precise location of each $k_j$ is left unspecified for the moment---choosing the location of each $k_j$ is the subject of this paper. We emphasize that when the number of control points, $n$, is specified, the number of knots is always $n+p+1$ by definition. Given a knot vector $\mathbf{k} = \{k_j\}$ and a polynomial degree $p$, the B-spline basis functions are defined by the recurrence:
\begin{equation}
\begin{aligned}
N_{j,0}(u) &= \begin{dcases}
1 & \text{ if } u \in [k_j, k_{j+1})\\
0 & \text{ otherwise }
\end{dcases}\\
N_{j,p+1}(u) &= \frac{u-u_j}{u_{j+p} - u_j}N_{j,p}(u) + \frac{u_{j+p+1}-u}{u_{j+p+1}-u_{j+1}}N_{j+1,p}(u), \qquad p \geq 0,
\end{aligned}
\end{equation}
which was introduced by Cox~\cite{cox1972numerical} and de Boor~\cite{de1972calculating}.

The B-spline fitting problem seeks to represent a discrete data set with a function of the form in \cref{eq:spline-def}. Given a set of discrete data $\{\mathbf{Q_0},\ldots,\mathbf{Q_{m-1}}\} \subset \mathbb{R}^d$ representing a one-dimensional curve, we are interested in minimizing the quantities $|\mathbf{C}(u_i) - \mathbf{Q_i}|$, $0 \leq i \leq m-1$, where each $u_i \in [0,1]$ is some real parameter chosen to correspond to the point $\mathbf{Q_i}$. The construction of a spline approximant begins by choosing the knot vector $\mathbf{k} = \{k_j\}$ and the polynomial order $q$. Then, the locations of the control points $\mathbf{P_j}$ that minimize the $L^2$ error $E(\mathbf{k},q) = \left(\sum_i |\mathbf{C}(u_i) - \mathbf{Q_i}|^2\right)^{1/2}$ are computed. This computation is straightforward, requiring the solution of a linear least squares system of size $m \times n$, where $m$ is the number of input points and $n$ the number of control points.

In this paper, we consider scalar-valued input data sampled on a regular grid in one or two dimensions. Given $m$ uniform samples on an interval $[a,b]$, we denote the sampled function by $f$ and the $i^{th}$ sampled value by $f_i$. The parameters associated to each input point are likewise assumed to be uniformly distributed; hence, $u_i = i/(m-1)$ for $0 \leq i \leq m-1$.
Similar conventions are made for scalar data sets sampled on a regular two-dimensional grid; the only changes are in notation. For 2D data sampled on the rectangle $[a_1,b_1]\times [a_2,b_2]$, we denote by $m_1$ and $m_2$ the number of inputs in each dimension; the total input size is therefore $m_1m_2$. Input values $f_{i_1,i_2}$ denote samples of a function $f$ from the $i_1^{th}$ row and $i_2^{th}$ column. As in the case of 1D data, we assume uniformly distributed parameters $u_1, u_2$ in each dimension.

Our primary error metrics considered when assessing the accuracy of a B-spline approximation are the root mean-squared (RMS) and maximum error:
\begin{equation}\label{eq:rms-error}
e_{RMS} = \sqrt{\frac{1}{m}\sum_{i=0}^{m-1} \abs*{C(u_i) - f_i}^2 } \qquad
	e_{max} = \max_{0 \leq i \leq m-1} \abs*{C(u_i) - f_i}.
\end{equation}
In higher dimensions, we apply analogous RMS and maximum error norms where the indexing for the sum or max operator is expressed over all dimensions.

\subsection{Error Analysis of Derivative-Informed Knot Placement}\label{ssec:deriv-method}
A direct method using high-order derivatives to guide the placement of knots was recently introduced by Yeh et al.~\cite{yeh2020knot}. The same reference shows that this method compares favorably against a number of popular knot placement methods and is computationally efficient. We summarize the basic structure of this method here and introduce a new justification for the accuracy of the method as well.

Let $f^{(q)}(x)$ be the $q^{th}$ derivative of $f$.  We define the ``feature function'' of $f$ to be the function on the parameter space $[0,1]$ given by
\begin{equation}\label{eq:feat-func}
F(u) = \left|f^{(q)}\left(a + u(b-a)\right)\right|^{\frac{1}{q}}.
\end{equation}
The feature curve $F(u)$ is a measure of the $q^{th}$ derivative of $f$ at a point corresponding to the parameter $u$. This may be considered a high-order analogue to curvature, which is based on the second derivative of a signal. The method is designed to subdivide the parameter space with knots such that each knot span contains roughly the same amount of integrated feature. To do this, we compute a normalized cumulative distribution function (CDF) of $F(u)$ and choose interior knots $\{k_j\}_{j=q}^{n-1}$ such that every knot span is mapped to an interval of equal size. 

To be precise, for $u \in [0,1]$ let 
\begin{equation}
G(u) = \dfrac{\int_0^u F(t)\,dt}{\int_0^1 F(t)\,dt}
\end{equation} be the CDF of the feature curve. We will refer to $G(u)$ as the ``Feature CDF.''  The full knot vector $\mathbf{k} = \{k_j\}_{j=0}^{n+q-1}$ is defined by
\begin{equation}\label{eq:knot-placement}
k_j = \begin{dcases}
0 & \text{ if } 0 \leq j \leq q-1,\\
1 & \text{ if } n \leq j \leq n+q-1,\\
G^{-1}\left(\frac{j-q+1}{n-q+1}\right) & \text{ otherwise.}
\end{dcases}
\end{equation}
Under this definition, the CDF function $G$ satisfies the property
\begin{equation}\label{eq:cdf-spacing}
G(k_{j+1}) - G(k_j) = \frac{1}{n-q+1} \quad \text{for } q-1 \leq j \leq n.
\end{equation}
Furthermore, $\mathbf{k}$ is constructed with $q$ repeated knots at each endpoint, a convention in B-spline approximation that specifies the behavior of a spline at its endpoints. Finally, we remark that while $G$ is assumed here to be invertible, this is not a requirement in practice.\footnote{
	It is possible that $G$ is constant on a subinterval of $[0,1]$ (for instance if $f$ itself is locally constant). In this case, $G$ would not be invertible and the knot construction procedure would be ambiguous. In practice, we consider a perturbed CDF, $G_\epsilon(u) = G(u) + \epsilon u$,
	where $\epsilon$ is small; this perturbed CDF is always monotonically increasing. The choice of $\epsilon$ is arbitrary and we use $\epsilon = (1000m)^{-1}$ in our implementation. For the sake of clarity, in the remainder of this paper we shall  assume that $G(u)$ is strictly increasing, understanding that it can always be perturbed to be so.
}

Yeh et al.~\cite{yeh2020knot} introduced this knot placement scheme based on empirical considerations.  One strength that was observed is that the method generates a spline with roughly equal approximation error throughout the domain, no matter the distance between two adjacent knots. For example, consider the signal shown in \cref{fig:knot-place-effect}. Here, uniform knot placement is compared with knots chosen according to \cref{eq:knot-placement} (which we call ``derivative-informed'' knot placement). In the residual plot on the right, we observe that uniform knot placement is much less accurate near the center of the domain, but more accurate at the extremes of the domain. In contrast, the derivative-informed knots produce an approximation with roughly equal accuracy throughout the domain, even though the knot spans vary in size. In effect, derivative-informed knot placement ``distributes'' error equally throughout the domain. In the remainder of this subsection, we provide a theoretical explanation for this behavior.

\begin{figure}[h!]
	\centering
	\begin{minipage}{.49\textwidth}
	\includegraphics[width=\textwidth]{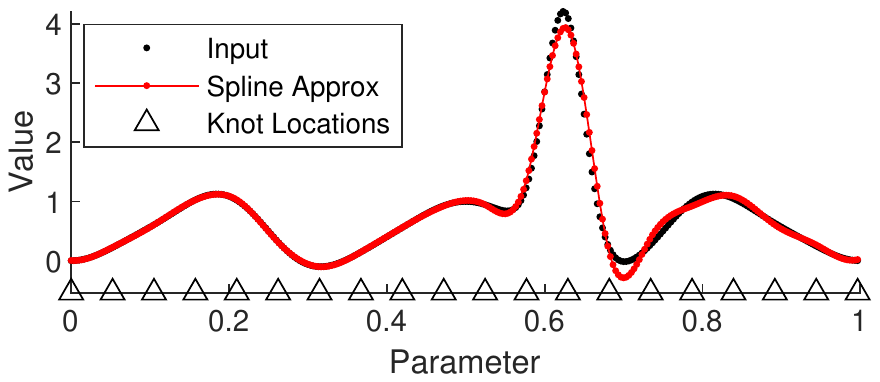}\\
	\includegraphics[width=\textwidth]{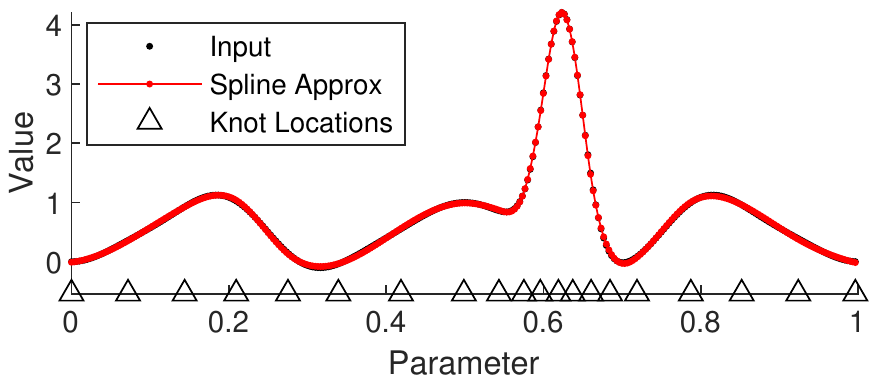} 
	\end{minipage}
	\begin{minipage}{.49\textwidth}
	\includegraphics[width=\textwidth]{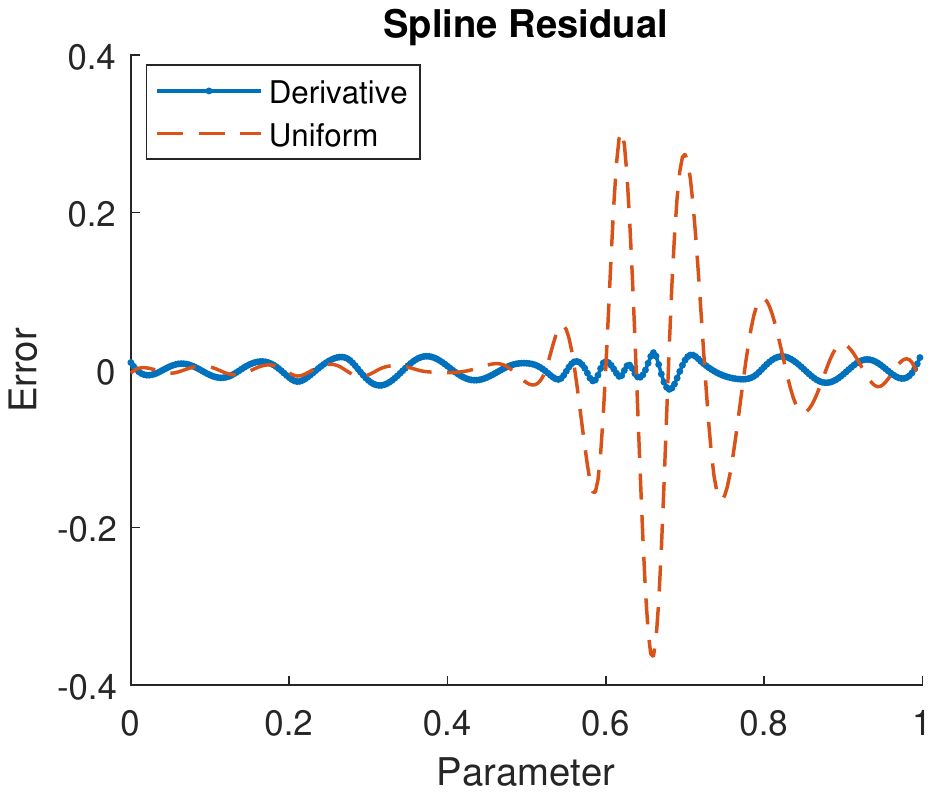}
	\end{minipage}
	\caption{The effect of knot placement on the distribution of error in a spline approximation.Uniform knot placement (top left) and derivative-informed knot placement (bottom left) are applied to data with a steep peak. The residuals for each approximation are compared at right.}
	\label{fig:knot-place-effect}
\end{figure}

For every pair of consecutive knots $k_j, k_{j+1}$ and $q-1\leq j\leq n-1$, we define the $j^{th}$ knot span $I_j = [k_j,k_{j+1}]$. The size of this span is denoted $h_j = k_{j+1}-k_j$. For every knot, we are also interested in the largest knot span within a small neighborhood of the knot. To describe the neighborhood of interest, we set $\widetilde{I}_j = [k_{j-q+1}, k_{j+q}]$ and $\widetilde{h}_j = \max(h_{j-q+1},\ldots,h_{j+q-1})$.

The central fact to be established is the following.
\begin{claim}\label{claim:deriv-error}
Let $f$ be a function with $q+1$ continuous derivatives and $\mathbf{k} = \{k_j\}$ the knot vector defined according to \cref{eq:knot-placement}. If the knot span widths $\{h_j\}$ are locally quasi-uniform and $C$ is the B-spline of order $q$ with knots $\mathbf{k}$ that best approximates $f$, then 
\begin{equation}
\norm{f-C}_{C(I_j)} \leq A\frac{\int_0^1 \abs{f^{(q)}(t)}^{1/q}\,dt}{n-q+1}
\end{equation}
for all $q-1 \leq j \leq n-1$ and some constant $A>0$.
\end{claim}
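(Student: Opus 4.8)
The plan is to combine a classical local estimate for best spline approximation with the equidistribution property built into the construction of $\mathbf{k}$. For the first ingredient I would invoke the standard de Boor-type local bound: if $f$ has $q$ continuous derivatives and the mesh is locally quasi-uniform, then the order-$q$ best approximation $C$ satisfies
\begin{equation*}
\norm{f - C}_{C(I_j)} \le K\,\widetilde{h}_j^{\,q}\,\norm{f^{(q)}}_{C(\widetilde{I}_j)},
\end{equation*}
where $K$ depends only on $q$ and the local mesh-ratio bound. The hypothesis that $\{h_j\}$ is locally quasi-uniform is exactly the condition that makes this estimate hold with a $j$-independent constant, which is why it appears in the statement. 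Everything after this is the task of re-expressing the right-hand side in terms of the feature integral and exploiting \cref{eq:cdf-spacing}.

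For the second ingredient, note that by definition $\norm{f^{(q)}}_{C(\widetilde{I}_j)} = \bigl(\max_{\widetilde{I}_j} F\bigr)^{q}$, up to the fixed affine factor from the change of variables $x = a + u(b-a)$, which I absorb into the constant; hence $\norm{f-C}_{C(I_j)} \le K'\bigl(\widetilde{h}_j \max_{\widetilde{I}_j} F\bigr)^{q}$. On the other hand, the definition of $G$ together with \cref{eq:cdf-spacing} says that every span carries the same amount of feature,
\begin{equation*}
\int_{I_i} F(t)\,dt = \frac{1}{n-q+1}\int_0^1 F(t)\,dt =: m \qquad\text{for all } i,
\end{equation*}
so the average value of $F$ on each span is $m/h_i$. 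The crux is therefore the pointwise estimate $\widetilde{h}_j \max_{\widetilde{I}_j} F \le C'\,m$, after which $\norm{f-C}_{C(I_j)} \le K'(C')^{q} m^{q}$; the stated bound then follows because $m \le \int_0^1 F(t)\,dt$ is bounded, so $m^{q} \le \bigl(\int_0^1 F(t)\,dt\bigr)^{q-1} m$ and one may take $A = K'(C')^{q}\bigl(\int_0^1 F(t)\,dt\bigr)^{q-1}$. (In fact this argument yields the sharper estimate $\norm{f-C}_{C(I_j)} \le K'(C')^{q} m^{q}$, of which the claimed first-power bound is a weakening.)

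The main obstacle is precisely the estimate $\widetilde{h}_j \max_{\widetilde{I}_j} F \le C'\,m$, i.e. showing that equidistributing $\int F$ also controls the \emph{pointwise} maximum of $F$. This can fail in general: a tall, narrow spike in $F$ inside one span inflates $\max F$ while leaving $\int F$ moderate. I would close the gap using both hypotheses together. Local quasi-uniformity forces neighboring span widths to differ by at most a fixed factor; since every span carries the same integral $m$, the average values $m/h_i$ over adjacent spans differ by at most that factor, which rules out exactly the spike scenario (a spike would demand vanishingly small spans beside ordinary ones). The $C^{q+1}$ regularity then upgrades this quantitatively: writing $M = \max_{\widetilde{I}_j} F$ attained in a span $I_{i^*}$ at a point $t^*$, the Lipschitz bound $|f^{(q)}(t)| \ge M^q - \norm{f^{(q+1)}}_\infty\,|t - t^*|$ gives $F \ge 2^{-1/q} M$ on a neighborhood of $t^*$ whose width is controlled, so the average $m/h_{i^*}$ is bounded below by a fixed multiple of $M$. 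Combining with quasi-uniformity ($\widetilde{h}_j \le c\,h_{i^*}$) then yields $\widetilde{h}_j M \le C'\,m$.

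Finally I would record that the resulting bound is uniform in $j$: every interior span obeys the same estimate $\norm{f-C}_{C(I_j)} \le A\,m = A\,\frac{\int_0^1 F(t)\,dt}{n-q+1}$, which is the precise sense in which derivative-informed placement equidistributes the error, matching \cref{fig:knot-place-effect}. The only step beyond the classical estimate is the spike-control argument above; the degenerate case $M=0$, in which $f$ is locally a polynomial of degree $<q$ and is reproduced exactly, is handled trivially, and boundary spans are covered by the repeated-knot convention.
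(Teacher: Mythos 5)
Your proposal follows the same route as the paper: the two ingredients are identical, namely the local best-approximation estimate \cref{eq:sp-estimate} (quoted from Schumaker) together with the equidistribution property \cref{eq:cdf-spacing} of the feature CDF. The one place you genuinely diverge is instructive. The paper passes from $\int_{I_j}F\,dt$ to $h_j\norm{F}_{C(I_j)}$ with an unjustified ``$\approx$'', and this is not an innocent approximation: the automatic inequality $\int_{I_j}F\,dt\leq h_j\norm{F}_{C(I_j)}$ points the wrong way for the final bound, which requires the reverse inequality up to a constant. You correctly isolate this as the crux (``equidistributing $\int F$ must also control the pointwise maximum of $F$'') and attempt to prove $\widetilde{h}_j\max_{\widetilde{I}_j}F\leq C'\,m$ from the Lipschitz continuity of $f^{(q)}$; you also sidestep the paper's separate, equally unjustified constant $A_2$ in $\norm{f^{(q)}}_{C(\widetilde{I}_j)}\leq A_2\norm{f^{(q)}}_{C(I_j)}$ by working with $\max_{\widetilde{I}_j}F$ directly. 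Your sketch is more honest than the paper's argument but is itself not airtight: the Lipschitz bound only gives $F\geq 2^{-1/q}M$ on a neighborhood of $t^*$ of width on the order of $M^q/\norm{f^{(q+1)}}_{\infty}$, and when that width is smaller than $h_{i^*}$ you obtain a lower bound of the form $m\geq c\,M^{q+1}$ rather than $m\geq c\,h_{i^*}M$, which does not directly deliver $\widetilde{h}_jM\leq C'm$ with a constant independent of $n$. Finally, your power bookkeeping is the more careful one: the paper's display $h_j^q\norm{f^{(q)}}_{C(I_j)}\approx \int_0^1F\,dt/(n-q+1)^q$ drops a $q$-th power on the numerator, whereas your $m^q\leq\left(\int_0^1F\,dt\right)^{q-1}m$ weakening is the correct way to land on the first-power form stated in the claim, at the cost of a constant $A$ depending on $f$.
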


When $\mathbf{k}$ is chosen according to \cref{eq:knot-placement}, we have shown that $G(k_{j+1}) - G(k_j) = (n-q+1)^{-1}$ is constant (see \cref{eq:cdf-spacing}). The definition of $G$ implies that 
\begin{equation}\label{eq:deriv-1}
\frac{1}{n-q+1} = \frac{\int_0^{k_{j+1}} F(t)\,dt}{\int_0^1 F(t)\,dt} - \frac{\int_0^{k_j} F(t)\,dt}{\int_0^1 F(t)\,dt} = \frac{\int_{k_j}^{k_{j+1}} F(t)\,dt}{\int_0^1 F(t)\,dt} \approx h_j \frac{\norm{F}_{C(I_j)}}{\int_0^1 F(t)\,dt}
\end{equation}
for all $q-1 \leq j \leq n-1$. 

Next, it is useful to recast the right-hand side of \cref{eq:deriv-1} in terms of the derivative of $f$. By the definition of the feature function $F$, 
\begin{equation}
	\norm{F}_{C(I_j)} = \norm{\abs{f^{(q)}}^{1/q}}_{C(I_j)} = \norm{f^{(q)}}_{C(I_j)}^{1/q},
\end{equation} 
and consequently
\begin{equation}
	h_j^q \norm{f^{(q)}}_{C(I_j)} \approx \frac{\int_0^1 F(t)\,dt}{(n-q+1)^q}
\end{equation}
for all $q-1 \leq j \leq n-1$.

Next, we apply a fundamental property of spline approximation.
\begin{lemma}
	Let $f$ be a function with $q$ continuous derivatives, $\mathbf{k}$ a knot vector, and $C$ the best-fit B-spline of order $q$. Then for each knot span $I_j$, $C$ satisfies
\begin{equation}\label{eq:sp-estimate}
	\|f - C\|_{C(I_j)} \leq A_1 \widetilde{h}_j^q \|f^{(q)}\|_{C(\widetilde{I}_j)} \quad \text{for } q-1 \leq j \leq n-1,
\end{equation}
for some constant $A_1>0$.
\end{lemma}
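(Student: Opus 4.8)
The plan is to prove this as a localized, Jackson-type (direct) estimate, following the classical strategy of de Boor. Three ingredients are needed: local reproduction of polynomials by the B-spline basis, a Taylor approximation that produces a local polynomial matching $f$ to order $q$, and a linear projection onto the spline space that is local and bounded. The first two ingredients handle a convenient near-best approximant, and the last is what lets us pass to the genuine best fit $C$.

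First I would construct a \emph{quasi-interpolant} $Q$ mapping continuous functions to splines of order $q$ on $\mathbf{k}$. Built from the de Boor--Fix dual functionals, such a $Q$ enjoys three properties: it reproduces every polynomial of degree at most $q-1$; its value on $I_j$ depends only on the restriction of $f$ to the enlarged span $\widetilde{I}_j$; and it is uniformly bounded, $\norm{Qg}_{C(I_j)} \leq A_0 \norm{g}_{C(\widetilde{I}_j)}$, with $A_0$ depending only on the order $q$ and not on $f$ or on the knot spacing. The reproduction property follows from the Marsden identity, and the uniform boundedness from the classical estimate on the norms of the de Boor--Fix functionals.

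Next I would control the local polynomial approximation error. Let $P$ be the degree-$(q-1)$ Taylor polynomial of $f$ expanded about a point of $\widetilde{I}_j$. Taylor's theorem with remainder gives $\norm{f - P}_{C(\widetilde{I}_j)} \leq c\,\abs{\widetilde{I}_j}^{q}\norm{f^{(q)}}_{C(\widetilde{I}_j)}$, and since $\widetilde{I}_j = [k_{j-q+1},k_{j+q}]$ consists of exactly $2q-1$ knot intervals, each of width at most $\widetilde{h}_j$, we have $\abs{\widetilde{I}_j} \leq (2q-1)\widetilde{h}_j$. Combining this with the reproduction property $QP = P$ and the boundedness of $Q$,
\begin{equation}
\begin{aligned}
\norm{f - Qf}_{C(I_j)} &= \norm{(f-P) - Q(f-P)}_{C(I_j)} \leq (1 + A_0)\norm{f-P}_{C(\widetilde{I}_j)}\\
&\leq (1+A_0)\,c\,(2q-1)^q\,\widetilde{h}_j^{q}\norm{f^{(q)}}_{C(\widetilde{I}_j)},
\end{aligned}
\end{equation}
which already establishes the bound, with $A_1 = (1+A_0)\,c\,(2q-1)^q$, for the near-best approximant $Qf$.

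The final step, which I expect to be the main obstacle, is to transfer the estimate from $Qf$ to the genuine best fit $C$. Since $C$ minimizes the fitting error over the entire spline space, it is at least as accurate as $Qf$ in the global (least-squares) norm; but the lemma asserts control in the \emph{local} sup-norm $\norm{\cdot}_{C(I_j)}$. Bridging this gap requires the $L^\infty$-stability of the least-squares spline projection---namely, that its operator norm is bounded independently of the knot sequence---which is precisely the content of Shadrin's theorem (formerly de Boor's conjecture). I would invoke this result to dominate $\norm{f-C}$ by a fixed multiple of $\norm{f-Qf}$ and then localize. The cleanest exposition, however, sidesteps the deep stability result entirely by taking $C = Qf$, a near-best spline, so that the comparison is immediate; every remaining step then reduces to the standard Taylor and B-spline estimates above, and the $L^\infty$-stability of the global projection is the only genuinely hard point.
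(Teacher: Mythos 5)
Your proposal reconstructs from scratch a result that the paper does not actually prove: its entire ``proof'' is a citation to Schumaker's Theorem~6.24, and the argument you outline---a local, bounded, polynomial-reproducing quasi-interpolant $Q$, a Taylor expansion on $\widetilde{I}_j$, and the triangle-type estimate $\norm{f-Qf}_{C(I_j)} = \norm{(f-P)-Q(f-P)}_{C(I_j)} \leq (1+A_0)\norm{f-P}_{C(\widetilde{I}_j)}$---is precisely the standard proof behind that citation, so in substance you and the paper are relying on the same mathematics. Two caveats. First, a minor technical one: the raw de Boor--Fix functionals involve derivatives of $f$ at points of $\widetilde{I}_j$ and are therefore \emph{not} bounded on $C(\widetilde{I}_j)$; to get the uniform bound $\norm{Qg}_{C(I_j)} \leq A_0\norm{g}_{C(\widetilde{I}_j)}$ one must first compose with a local polynomial approximation (or use point-evaluation/divided-difference dual functionals, as in Schumaker's construction). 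This is a standard and fixable wrinkle, not a fatal flaw, but as written that sentence of your argument is not quite right. Second, you have correctly put your finger on a gap that the paper itself glosses over: the cited theorem bounds $\norm{f-Qf}_{C(I_j)}$ for the quasi-interpolant, whereas the lemma is stated for the best-fit spline $C$ (which in this paper is the least-squares fit). Transferring a \emph{local} sup-norm bound to the global $L^2$ minimizer genuinely requires more than Shadrin's global $L^\infty$-stability of the orthogonal projection---one needs a localized version, typically obtained from the exponential off-diagonal decay of the inverse Gram matrix (Demko/de Boor). Your closing suggestion to simply take $C=Qf$ matches what the cited theorem actually delivers and is the honest reading of how the lemma is used in the paper, but it does change the statement from ``the best fit satisfies'' to ``there exists a spline satisfying.''
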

\begin{proof} 
	See \cite[Thm. 6.24]{basic_theory}.
\end{proof}

To apply this lemma, two assumptions are needed. If we assume that $f$ has at least $q+1$ continuous derivatives, then there is another constant $A_2>0$ such that $\norm{f^{(q)}}_{C(\widetilde{I}_j)} \leq A_2 \norm{f^{(q)}}_{C(I_j)}$. While this is a strong assumption, our intent at present is only to study the approximation in regions where $f$ is smooth. Treating regions where $f$ is not smooth is the motivation behind the methods in \cref{ssec:smooth-proxy} and \cref{ssec:jump-detect}. Additionally, if we assume that the knot span widths $\{h_j\}$ are locally quasi-uniform, then there is yet another constant $A_3>0$ such that $\widetilde{h}_j \leq A_3 h_j$ for $q-1\leq j\leq n-1$.

With these two assumptions, and setting $A=A_1A_2A_3>0$, we deduce from \cref{eq:sp-estimate} that
\begin{equation}
\begin{aligned}
	\|f - C\|_{C(I_j)} &\leq A_1 \widetilde{h}_j^q \|f^{(q)}\|_{C(\widetilde{I}_j)}\\
	&\leq A h_j^q \norm{f^{(q)}}_{C(I_j)}\\
	&\approx A \frac{\int_0^1 F(t)\,dt}{(n-q+1)^q}
\end{aligned}
\end{equation}
for all $q-1 \leq j \leq n-1$. This establishes a bound for the maximum error on each knot span that, crucially, does not depend on the width of the span. Thus, as the number of control points $n$ increases, the error within each knot span is decreased simultaneously.

\subsection{Spectral Representation of Signals} \label{ssec:fourier-theory}
Let $\mathcal{F}$ denote the Fourier transform operator and $\mathcal{F}^{-1}$ the inverse Fourier transform. Given a continuous function $f(x)$, we denote the Fourier transform of $f$ as $\hat{f}(\xi) := \mathcal{F}(f)(\xi)$. We say that $f$ is a function in physical space while $\hat{f}$ is a function in Fourier space or spectral space. 

Owing to the convolution theorem, which states that convolutions in real space become products in Fourier space, we can regard any function $K(\xi)$ in frequency space as a filter. Taking the inverse Fourier transform after filtering $\hat{f}$ can produce useful information about $f$. For instance, when $K(\xi) = i\xi$, the Fourier duality between multiplication and differentiation tells us that $\mathcal{F}^{-1}(K(\xi)\hat{f}(\xi)) = f'(x)$. Other filters may be applied using this same technique to produce smoothed versions of $f$ (c.f. \cref{ssec:smooth-proxy}) or indicator functions locating jump discontinuities in $f$ (c.f. \cref{ssec:jump-detect}). 
Specifically, given a function $f(x)$ and spectral filter $K(\xi)$, we are interested in the filtered function 
\begin{equation}\label{eq:filter}
\widetilde{f}(x) = \mathcal{F}^{-1}(K(\xi)\hat{f}(\xi)).
\end{equation}
Different properties of $f$ can be deduced by analyzing the result of applying different spectral filters, which is a cornerstone of modern signal processing.

The fast Fourier transform (FFT) decomposes a discrete signal into a combination of frequency components. Given a discrete signal $\{f_k\}_0^{N-1}$, the FFT computes a set of Fourier coefficients $\{F_n\}_0^{N-1}$. The FFT and its inverse (IFFT) are defined by
\begin{equation}
\begin{aligned}
F_n &= \sum_{k=0}^{N-1} f_k e^{-2i \pi k n / N},  \quad 0 \leq n \leq N-1,\\
f_k &= \frac{1}{N} \sum_{n=0}^{N-1} F_n e^{2i \pi k n / N}, \quad 0 \leq k \leq N-1.
\end{aligned}
\end{equation}
Spectral filtering techniques can be applied to discrete data in an analogous way. Given a discrete filter $K(\xi_n)$, an input signal $\{f_k\}_0^{N-1}$, and its discrete Fourier coefficients $\{F_n\}_{0}^{N-1}$, we compute the filtered signal according to the equation
\begin{equation}\label{eq:discrete-filter}
\widetilde{f}_k = \IFFT(K(\xi_n)\odot F_n),
\end{equation}
where $\odot$ denotes pointwise multiplication (Hadamard product) and $\{\xi_n\}_0^{N-1}$ is the vector of frequencies corresponding to each Fourier coefficient.

It may be noted that filters can be applied in physical space with a convolution instead of the method above.
While these two characterizations are equivalent, computational concerns lead us to compute in spectral space. When $\IFFT(K)$ has global support (which is the case for some filters considered herein), the computational complexity of the convolution needed to filter in physical space is quadratic. It will also be necessary to apply multiple filters to the same function, which further amplifies this difference in complexity. In this case, only one FFT/IFFT pair is required, no matter how many filters are chained together.

\section{Fourier-Informed Methods for Knot Placement} \label{sec:method}
Since a B-spline is a highly smooth curve, approximating non-smooth data poses a challenge unless certain steps are taken. For example, B-spline approximations of data with jump discontinuities often produce splines with overshoots and undershoots near the jump in a manner similar to the Gibbs phenomenon~\cite{richards1991gibbs}. However, this can be addressed by co-locating several knots near the jump, or (if the precise location of the jump is known) adding a knot of high multiplicity at the jump location.
In a similar way, data sets with sharp peaks or corners are typically smoothed out by B-spline approximations, unless several knots are placed in a neighborhood around the peak.

Our method incorporates three indicators to inform the placement of knots. These indicators are:
\begin{enumerate}
	\item High-order derivatives of the input
	\item Locations of jump discontinuities in the data and its first derivative
	\item A smooth proxy function
\end{enumerate}
Each indicator affects the knot vector in a different way and all are computed directly from the Fourier transform of the input data. 

\subsection{Knot Placement with High-Order Derivatives}\label{ssec:deriv-knots}
If $f(x)$ is a function with $q$ continuous derivatives, then the $q^{th}$ derivative $f^{(q)}(x)$ satisfies the identity
\begin{equation}
f^{(q)}(x) = \mathcal{F}^{-1}((i\xi)^q \hat{f}(\xi)).
\end{equation}
In order to compute derivatives of a discrete function $f_k$, we define the spectral filters
\begin{equation}\label{eq:deriv-filter}
	K_{d,q}(\xi) = (i\xi)^q
\end{equation}
for any non-negative integer $q$. When $f_k$ is sampled from a smoothly periodic function, the filtered signal
\begin{equation}\label{eq:discrete-deriv-approx}
	\widetilde{f}_k = \IFFT(K_{d,q}(\xi_n)\odot \FFT(f_k))
\end{equation}
will be an accurate approximation of $f^{(q)}$ at the $k^{th}$ grid point.

The accuracy of high-order derivatives computed with \cref{eq:discrete-deriv-approx} is observed to be several orders of magnitude higher than standard differentiation techniques when the data is smooth and periodic. For comparison, we considered two other common approaches to numerical differentiation: finite differences and estimation from a moving least-squares approximation~\cite{movinglsq}. \Cref{fig:smooth-deriv-accuracy} compares these three methods for a signal with 500 points. The finite difference scheme used was a first-order central difference. To estimate the derivative of order $q$ at a given point with a moving least-squares approximation, a polynomial of degree $q$ was fit to a patch of $k$ points around the given point. The derivative approximation is the $q^{th}$ derivative of the best-fit polynomial at the point. In our implementation, we set $k=\max(q+1, 2q-1)$.

From \cref{fig:smooth-deriv-accuracy} it is apparent that derivative computation by spectral filtering produces a highly accurate approximation. However, we remark that this approximation suffers for nonsmooth and/or nonperiodic data. Certain kinds of nonsmooth signals can be handled with the smoothing filter described in the following subsection, but nonperiodic signals are in general not suited to approximation by this method. There are techniques that may extend this approach to nonperiodic signals (c.f Future Work, \cref{sec:future}); at present, however, we consider only periodic data in our analysis.

\begin{figure}[h]
	\centering
	\includegraphics[width=.4\textwidth]{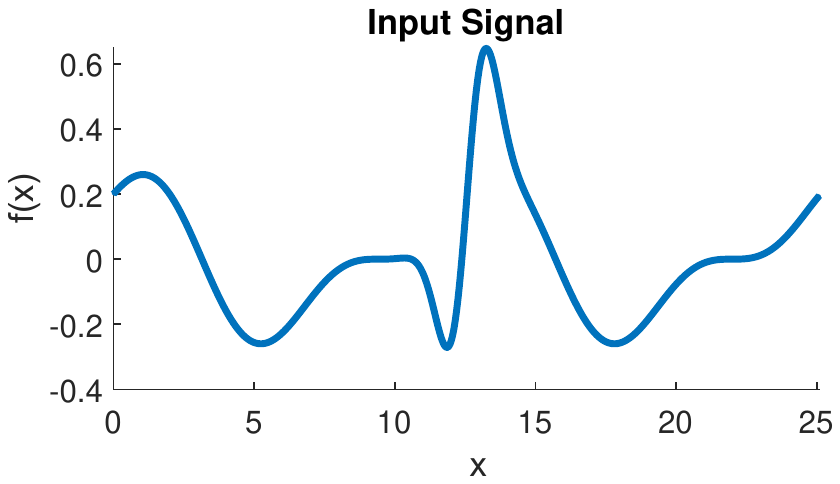} \qquad
	\includegraphics[width=.4\textwidth]{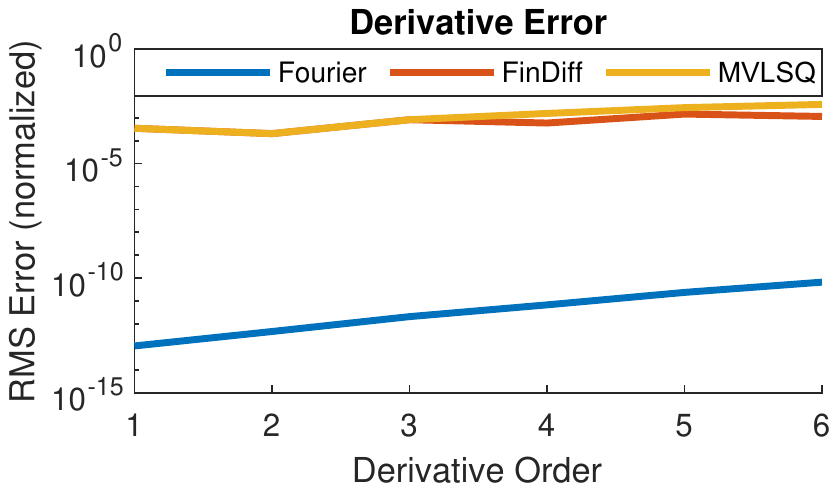}
	\caption{Left: A sample smooth periodic function. Right: Accuracy of numerical differentiation for the first six derivatives, using three methods. ``Fourier'' is the method described by \cref{eq:discrete-deriv-approx}, ``FinDiff'' is a central differencing scheme, and ``MVLSQ'' is an estimate based on a moving least-squares approximation.}
	\label{fig:smooth-deriv-accuracy}
\end{figure}

The representation in \cref{eq:discrete-deriv-approx} also indicates that once Fourier coefficients are computed for the first time, the computation of derivatives incurs only costs associated with a Hadamard product. The computational complexity of the FFT is the well-known $\mathcal{O}(N \, \log N)$. From the perspective of building a B-spline data model, this is a fixed cost resolved by dedicated software for efficient FFT implementations. Furthermore, the pointwise multiplications in Fourier space are embarrassingly parallel, regardless of the physical dimension or derivative order. The computational complexity of derivatives in Fourier space is $\mathcal{O}(N)$ and no communication is needed. This can be significantly faster than even vector inner products, which require a reduction that can bottleneck the computation on a parallel distributed data set. This aspect marks the superiority of approaching numerical differentiation in Fourier space, as opposed to real space, where the width of the stencil dictates the number of floating point operations, memory accesses, and potentially communication costs (in the case of parallel distributed data).

For smooth signals and numerical derivatives computed by \cref{eq:discrete-deriv-approx}, knot placement can be carried out via the method described in \cref{ssec:deriv-method}. By computing  derivatives in spectral space, the feature function will be constructed from highly-accurate derivatives in an efficient and scalable way.

\subsection{Knot Vectors for Noisy Data}\label{ssec:smooth-proxy}
While the use of derivatives as a knot placement heuristic has numerous advantages, one of its biggest disadvantages is sensitivity to noise. Whether computed by finite differences or Fourier transforms, high-order numerical derivatives of functions perturbed by noise can be highly inaccurate. One way to limit this numerical instability is to approximate noisy input data with a smooth proxy function. Conti et al.~\cite{conti2001cubic} took this approach in their method of using third derivatives to fit cubic splines. However, their construction relied heavily on a smoothing parameter that needs to be chosen to match the strength of the noise. 

The problem of smoothing a noisy signal is fundamental to the field of signal processing. One of the most common methods of smoothing a discrete signal is to convolve the signal with a smoothing kernel. By the convolution theorem, an equivalent operation can be carried out in Fourier space. When derivatives are computed using the method of \cref{ssec:deriv-knots}, the Fourier coefficients need not be recomputed, saving one $O(N\log N)$ operation.

There are numerous ways to filter a noisy signal and undertaking a thorough comparison of these methods is beyond the scope of this paper. Instead, we illustrate the concept using a simple low-pass Gaussian filter and show that we achieve good results with this straightforward approach. Suppose a given data set is uniformly sampled with grid size $h$, and a Gaussian blur is applied via the kernel 
\begin{equation}
L(x) = \frac{1}{\sigma \sqrt{2 \pi}} e^{-\frac{x^2}{2\sigma^2}},
\end{equation}
which is the standardized mean-zero Gaussian with standard deviation $\sigma$. If the standard deviation is taken to be one-half the grid spacing, $\sigma = h/2$. Then 
\begin{equation}
L(x) = \frac{1}{2h \sqrt{2\pi}} e^{-\frac{2x^2}{h^2}}
\end{equation}
and the smoothed proxy is computed via the discrete convolution of $L$ with the data.

The corresponding smoothing filter in Fourier space is given by the Fourier transform of $L(x)$. 
Thus, the formula for this filter is
\begin{equation}\label{eq:smooth-filter}
K_s(\xi) = \hat{L}(\xi) = e^{-\frac{\pi^2 h^2}{2}\xi^2}.
\end{equation}
In effect, filtering the Fourier spectrum with $K_s$ reduces the strength of high-frequency components within a noisy signal.  This produces behavior similar to applying a low-pass filter. \Cref{fig:noisy-spectrum} illustrates the effect of $K_s$ on a noisy data set and its Fourier spectrum.

\begin{figure}[h]
	\centering
	\includegraphics[width=.32\textwidth]{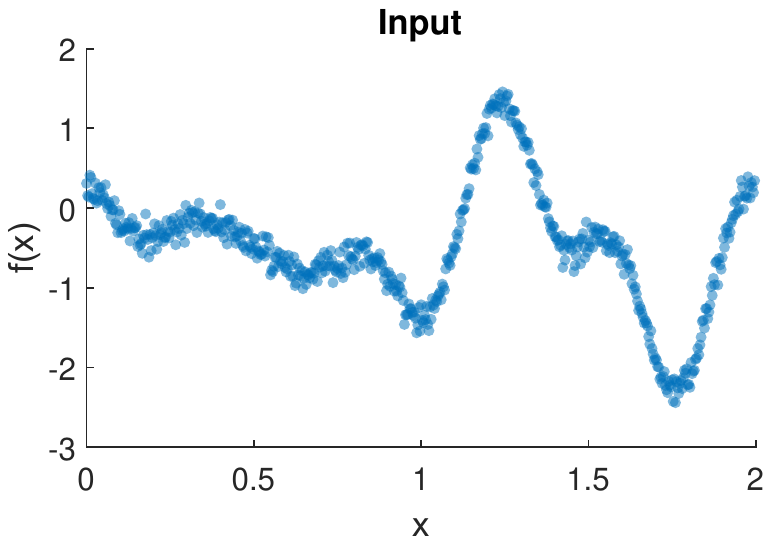} 
	\includegraphics[width=.32\textwidth]{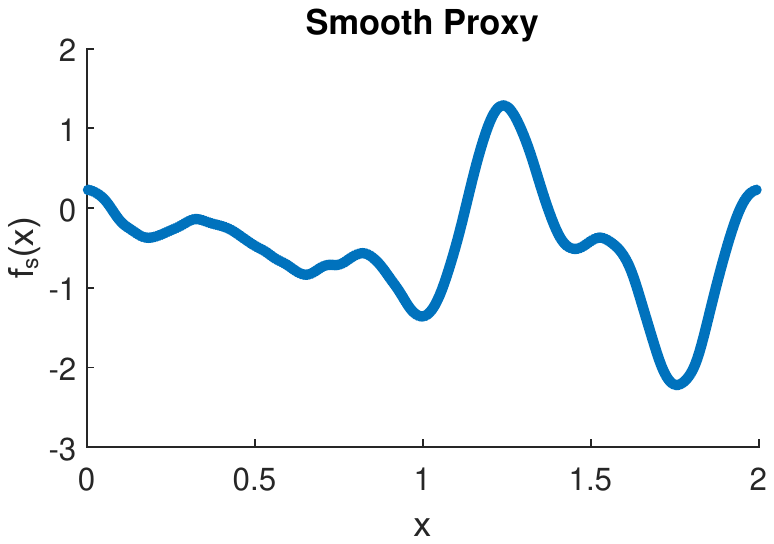} 
	\includegraphics[width=.32\textwidth]{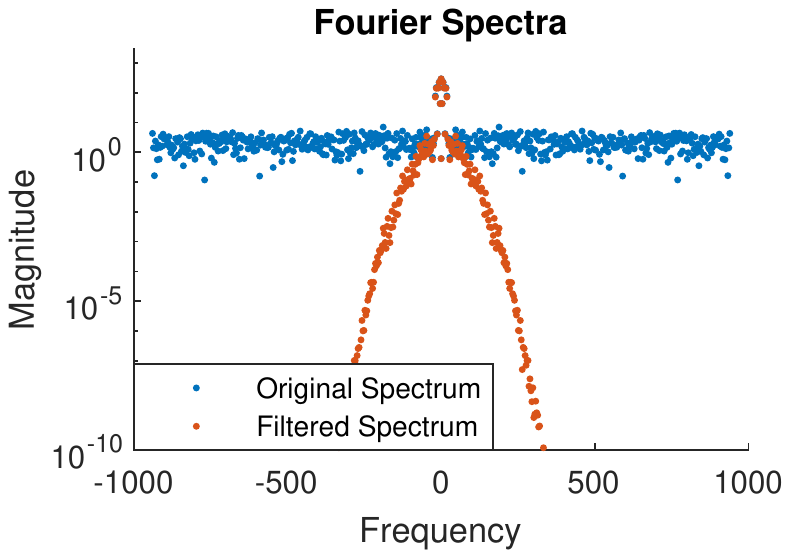}
	
	\vspace{-3ex}
	\caption{Left: A noisy signal $f(x)$. Center: Smooth proxy function $f_s(x)$ produced by spectral filtering. Right: Original Fourier spectrum (blue) and filtered spectrum (orange); note the filtered spectrum decays while the original does not.} 
	\label{fig:noisy-spectrum}
\end{figure}

In practice, the smooth proxy function (at right in the figure) is never computed explicitly; it is included for illustrative purposes only. By smoothing and computing derivatives in Fourier space, we can approximate the derivatives of the proxy without ever computing the smooth proxy in physical space. Computing the derivatives of the proxy function is carried out via
\begin{equation}
	\widetilde{f}_k = \IFFT(K_s(\xi_n) \odot K_q(\xi_n) \odot \FFT(f_k)).
\end{equation} 
Thus, the entire process of smoothing a noisy signal for the purposes of derivative computation amounts to applying a second filter to the existing workflow of \cref{ssec:deriv-knots}. As discussed above, this is an embarrasingly parallel $O(N)$ operation. In addition, the smoothing operation requires no parameter tuning, which allows for noisy data to be handled in an automatic way.

\subsection{Knot Vectors for Discontinuous Data}\label{ssec:jump-detect}
In the presence of discontinuities, B-spline approximations with equispaced knots will exhibit undesirable overshoots and undershoots. Our present focus is the treatment of functions with jump discontinuities in the value and/or in the derivative. To distinguish these two cases, we will refer to jumps in the function value as ``$C^0$ jumps'' and jumps in the derivative of the function as ``$C^1$ jumps.'' In general, spurious oscillations can be reduced by clustering several knots near a discontinuity; see \cref{fig:jump-example}. Therefore, it is highly desirable to identify any discontinuities in the data and treat them accordingly during the construction of the knot vector.

\begin{figure}[h]
	\centering
	\includegraphics[width=.47\textwidth]{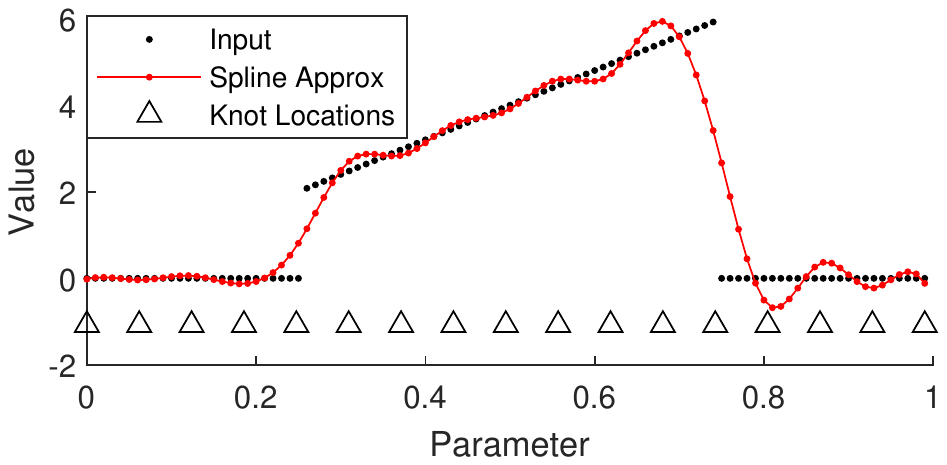} \hspace{.02\textwidth}
	\includegraphics[width=.47\textwidth]{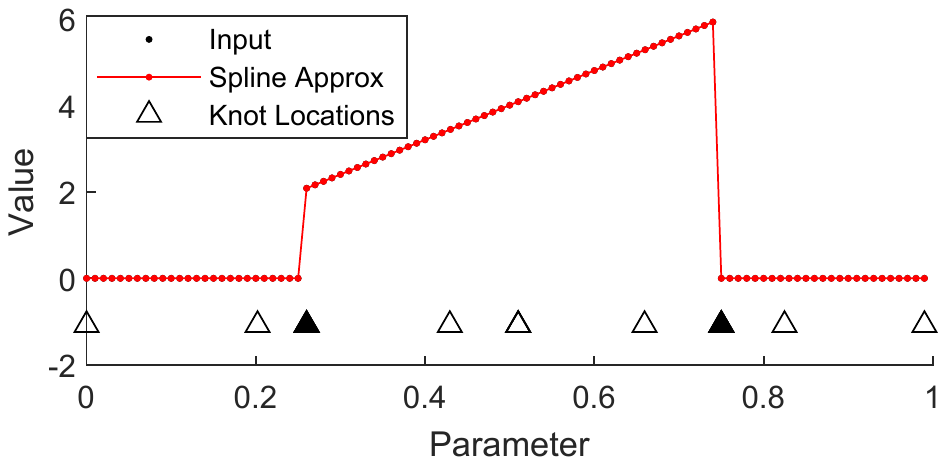}
	\caption{Effect of uniform knot placement (left) and jump-aware knot placement (right) on a cubic spline approximation of data with $C^0$ jumps. Filled-in triangles denote high-multiplicity knots.}
	\label{fig:jump-example}
\end{figure} 

In this subsection we apply a spectral filter that identifies $C^0$ and $C^1$ jumps directly from a signal's Fourier coefficients. Filters of this type were introduced by Tadmor, Gelb, Cates, and others~\cite{ cates2007detecting, gelb1999detection, gelb2000detection, gelb2008detection} in the context of edge detection. For an overview of this subject we refer the reader to the expository work of Tadmor~\cite{tadmor2007filters}; for a more detailed treatment of jump detection in the first derivative, see Cates and Gelb~\cite{gelb2008detection}.

We utilize the exponential concentration factor of Gelb and Tadmor~\cite{gelb2000detection},
\begin{equation}\label{eq:exp-factor}
\sigma_{exp}(\xi) = \frac{2 \pi i}{c_\alpha} \xi e^{\frac{1}{\alpha \xi(\xi-1)}},
\end{equation}
where $\alpha$ is a parameter and $c_\alpha$ is a normalization constant dependent on $\alpha$. Following the implementation of Gelb and Tadmor~\cite{gelb2000detection}, we set $\alpha = 6$; the corresponding normalization is $c_6 \approx 0.34$. The spectral filter associated to this concentration kernel, which we will use to locate $C^0$ and $C^1$ jumps, is
\begin{equation}\label{eq:exp-filter}
K_j(\xi) := \text{sign}(\xi)\sigma_{exp}\left(\frac{2\xi}{m}\right) \sinc\left(\frac{\pi \xi}{m}\right),
\end{equation}
where $m$ is the length of the input signal.

By filtering the Fourier coefficients with $K_j$ and then inverting the FFT, we produce a detector that isolates $C^0$ and $C^1$ jumps within the input signal:
\begin{equation}\label{eq:jump-detector}
J(x_k) := \IFFT( K_j(\xi_n) \odot \FFT(f_k)) =  
\begin{dcases}
	O(1) & \text{if there is a } C^0 \text{ jump at } x_k\\
	O(1/m) & \text{if there is a } C^1 \text{ jump at } x_k\\
	O(e^{-\gamma m}) & \text{otherwise}
\end{dcases}.
\end{equation}
Here $0 < \gamma \leq 1$ is a parameter describing the exponential decay rate away from jumps.

The precise magnitude of $J(x)$ depends on the size of the jump as well as its type ($C^0$ or $C^1$). In particular, if $f(x)$ has a $C^0$ jump of size $l$ at $x = c$, then $J(c) = O(l)$. If $f(x)$ has a $C^1$ jump of size $l$, then $J(c) = O(l/m)$. Therefore, the location of jump discontinuities can be determined automatically by identifying the local extrema of $J(x)$ above a user-specified threshold. By setting the value of $l$, the user has control over what they consider to be significant. It is also worth noting that the parameter $l$ does not enter into the definition of the filter at all (c.f. \crefrange{eq:exp-factor}{eq:exp-filter}); it is only used when searching for jumps by interpreting the content of $J(x)$.

\Cref{fig:jump-detect-ex} illustrates how this jump detector works in practice. Given an input that is smoothly periodic except for two $C^0$ jumps, the function $J(x)$ produces two spikes at those discontinuities. In addition, the height of these spikes is directly related to the size of their corresponding jumps. It is the correspondence between jump height in $f(x)$ and spike height in $J(x)$ that allows one to scan only for discontinuities of a certain size. 

\begin{figure}
	\centering
	\includegraphics[width=.45\textwidth]{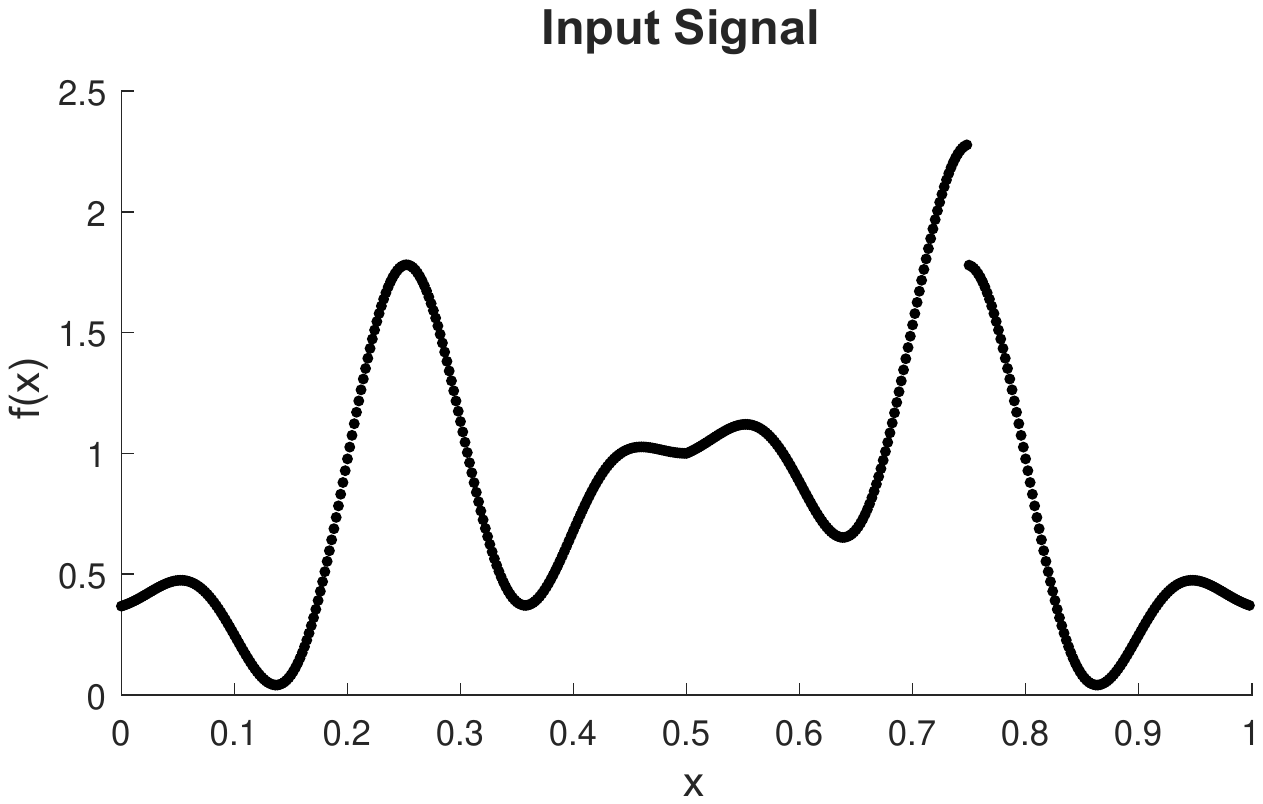} \hspace{.02\textwidth}
	\includegraphics[width=.45\textwidth]{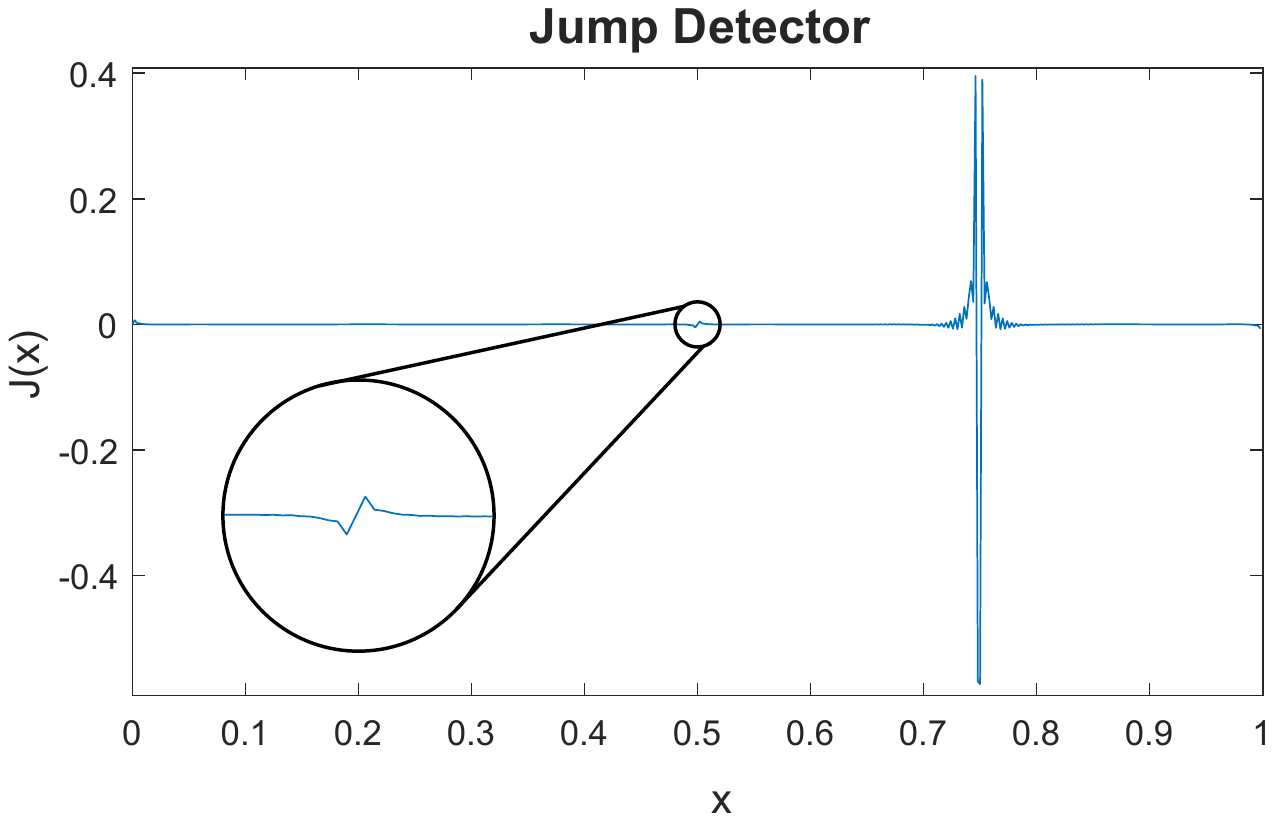}
	\caption{Left: Piecewise smooth periodic function with one $C^1$ jump at $x=0.5$ and one $C^0$ jump at $x=0.75$. Right: Output of the jump detector $J(x)$ for the input signal. Note that the size of the jumps in the input corresponds to the height of the spikes in the indicator.}
	\label{fig:jump-detect-ex}	
\end{figure}

In order to identify $C^1$ jumps in a signal, we adopt the two-pass approach of Cates and Gelb~\cite{cates2007detecting}. After $C^0$ jumps are identified, the entire jump indicator function may be scaled by a factor of $m$ and searched again for spikes where are $O(1)$. By \cref{eq:jump-detector}, this ensures that $C^1$ jumps will have the correct magnitude. The only extra step involved is to differentiate $C^1$ jumps from $C^0$ jumps. By scaling $J(x)$ by a factor of $m$, any spikes corresponding to $C^0$ jumps will also be picked up by the second pass searching for $C^1$ jumps. To determine the true type of a jump, we simply ignore any candidates for a $C^1$ jump that are detected in a neighborhood around a previously detected $C^0$ jump.

Once jumps are located within the input, knots can then be placed to account for the nonsmooth behavior. A discontinuous B-spline may be constructed by inserting a knot of multiplicity $q$ precisely at the detected jump. Under this construction, the resulting spline will exhibit no oscillations. In effect, the process that fits control points to the left and right of the discontinuity is decoupled; that is, the behavior of the signal on one side of the high-multiplicity knot does not affect the fit on the other side. Therefore, in our method we place a knot of multiplicity $q$ at the peak of every spike in $J(x)$ marking a $C^0$ jump. In order to handle $C^1$ jumps, we place a knot of multiplicity $q-1$ at the peak of each spike marking a $C^1$ jump.

After placing knots at each jump discontinuity, knots are placed throughout the smooth regions using the method in \cref{ssec:deriv-knots}. Since the data are nonsmooth (at each jump), it is necessary to construct a smooth proxy function for the input and compute the derivatives of the proxy. This step can be done immediately using the filter described in \cref{ssec:smooth-proxy}. We emphasize that adding this smoothing step requires essentially no extra work, since the Fourier coefficients were computed during the jump detection step. Smoothing the data simply requires two filters of the Fourier coefficients instead of one.

\subsection{Higher-dimensional data}\label{ssec:2d-method}
The methods described in the three previous subsections are all computed with a combination of FFTs and pointwise vector products. As a result, extensions to two dimensions and higher are straightforward and efficient. For simplicity, we will describe the two-dimensional case in detail. However, applications to higher dimensions follow the same structure as the two-dimensional case.

On an equispaced tensor product grid in two dimensions, the Fourier spectrum of a function $f: \mathbb{R}^2 \to \mathbb{R}$ may be computed with the 2D FFT. In this case, the Fourier coefficients $\hat{f}_{ij}$ lie on a 2D grid. Due to this tensor product structure, two-dimensional spectral filters can be applied in a separable fashion, one dimension at a time. To do this, one-dimensional slices of the 2D spectrum are selected and filtered with a 1D filter in each spatial direction independently. See \cref{fig:2d-spectrum} for an illustration of a two-dimensional Fourier spectrum and one-dimensional slices taken in each direction.

\begin{figure}[h!]
	\centering
	\begin{minipage}{.48\textwidth}
	\includegraphics[width=\textwidth]{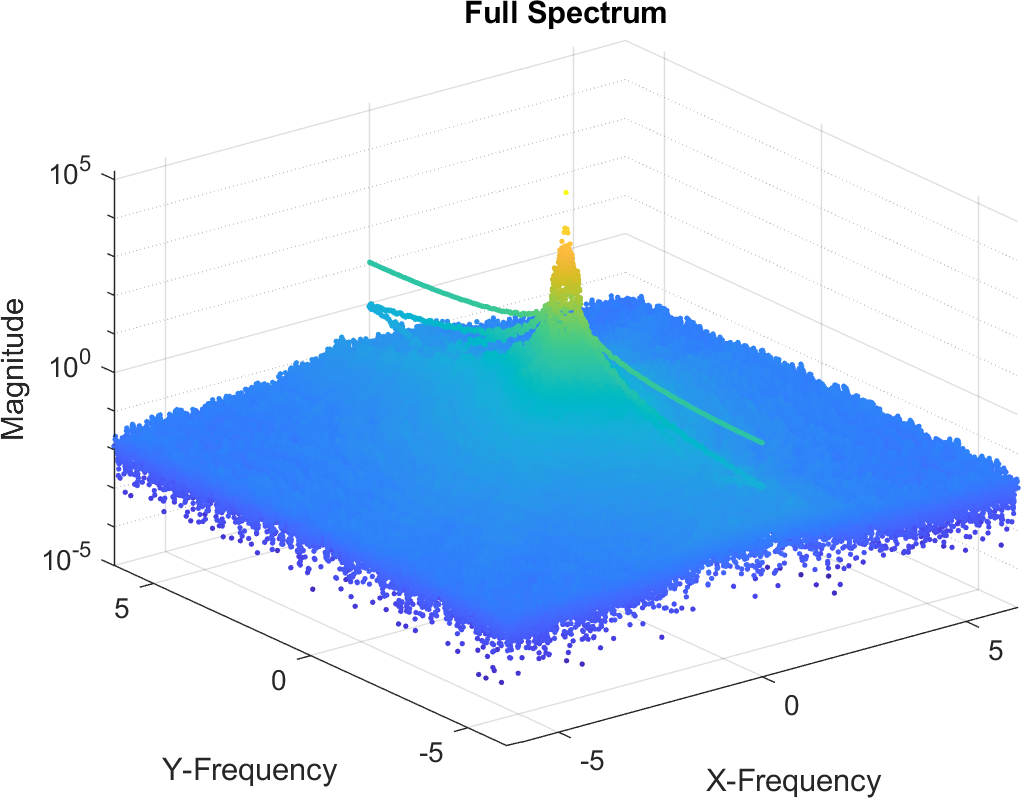} 
	\end{minipage}
	\begin{minipage}{.48\textwidth}
		\centering
		\includegraphics[width=\textwidth]{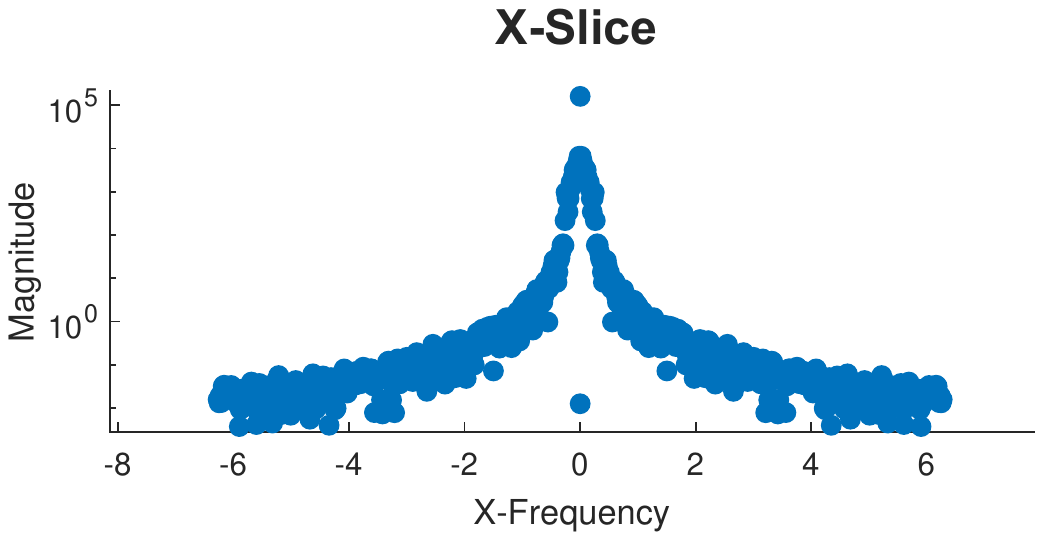}\\
		\includegraphics[width=\textwidth]{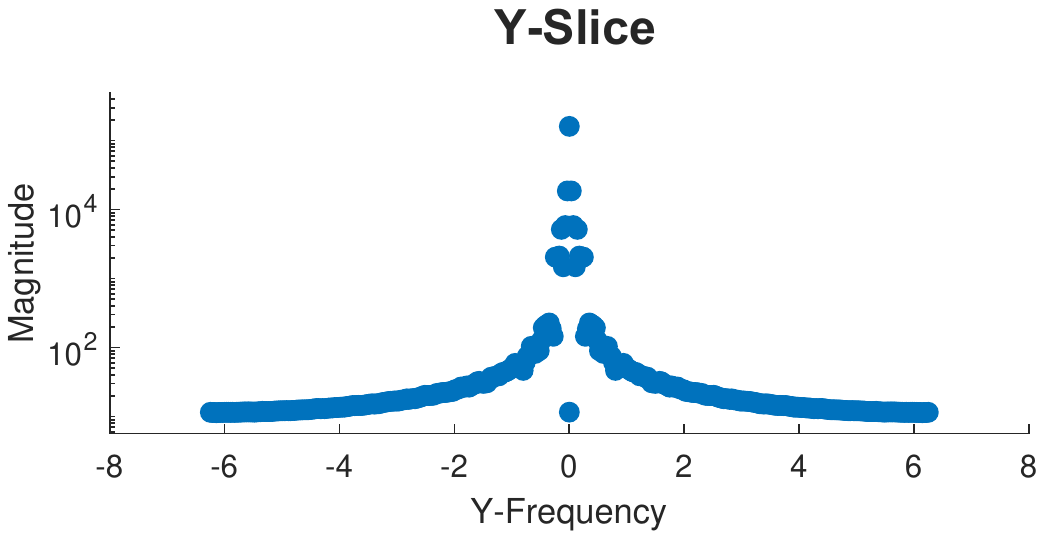}
	\end{minipage}
	\caption{Fourier spectrum of a two-dimensional function. At left: Full spectrum over frequency grid. At right: One dimensional slices of the Fourier modes taken in different dimensions.}
	\label{fig:2d-spectrum}
\end{figure}

\paragraph{Filters for Differentiation}
When modeling a two-dimensional surface with B-splines, it is necessary to consider partial derivatives in each dimension. To differentiate with respect to the first variable, we repeatedly apply the filter in \cref{eq:deriv-filter} to strands of coefficients with a fixed second index: $\{\hat{f}_{i, j}\}_{i=0}^{m_1-1}$. Once the filter operation has been applied for all $0 \leq j \leq m_2-1$, the partial derivatives are computed by taking the 2D IFFT of the filtered Fourier coefficients. This produces an approximation to $\frac{\partial^q f}{\partial x_1^q}$ at each grid point in the domain. Computing the second set of partial derivatives is done in the same way, except the filter operation is now applied to strands $\{\hat{f}_{i,j}\}_{j=1}^{m_2-1}$ where the first index is fixed.

Once both sets of partial derivatives have been computed, we construct two feature CDFs to guide the placement of knots---one for each dimension. As above, the process for each dimension is the same, up to a change in indexing. Therefore, we will only describe the process of creating a feature CDF for the first dimension. The feature function with respect to the first dimension is analogous to the one-dimensional case, with the first partial derivatives used in lieu of the full derivative. That is,
\begin{equation}
	F_1(u_1,u_2) = \abs{ \frac{\partial^q f}{\partial x_1^q}\left(a_1+u_1(b_1-a_1), a_2 + u_2(b_2-a_2)\right) }^{\frac{1}{q}}.
\end{equation}
Next, this two-dimensional feature function is collapsed along the second dimension to produce a new feature indicator that is a function of $u_1$ only. We set
\begin{equation}
	F_1^*(u_1) = \sum_{i=0}^{m_2-1} F_1\left( u_1, \frac{i}{m_2-1}\right).
\end{equation}
Thus $F_1^*$ will be large whenever $F_1(u_1,u_2)$ is large for some value of $u_2$, indicating a greater need for knots near $u_1$.

The feature CDF with respect to the first dimension is now simply the CDF of $F_1^*$:
\begin{equation}
	G_1(u) = \int_0^u F_1^*(t)\,dt.
\end{equation}
The knots in the first dimension are then chosen with $G_1$ according to the method described in \cref{ssec:deriv-method}. Then same procedure can be repeated to choose knots in the second dimension as well.

\paragraph{Filters for Smoothing} The smoothing filter in two dimensions is defined by a Gaussian in a manner similar to the one-dimensional case. In one dimension, the only problem dependent parameter in the smoothing filter was the mesh size $h$. On a two-dimensional equispaced grid, there are mesh sizes $h_1$ and $h_2$ describing the grid width for the first and second dimensions. The smoothing kernel takes on a form similar to \cref{eq:smooth-filter}, namely,
\begin{equation}
	K_s(\xi_1, \xi_2) = e^{-\frac{\pi^2}{2}(h_1^2\xi_1^2 + h_2^2\xi_2^2)}.
\end{equation}
To compute the smoothed version of a noisy 2D data set, it suffices to pointwise multiply $K_s$ with the Fourier coefficients and then take the 2D IFFT.

\paragraph{Filters for Jump Detection}
Like the filters for differentiation, applying filters to detect jumps in two-dimensional data involves a sense of directionality. At any point $(u_1,u_2)$ in parameter space, the signal may experience a jump in either cardinal direction. In fact, the signal may jump in a direction not aligned with either cardinal direction. Due to the tensor product structure of the grid points (and Fourier coefficients), however, we are constrained to only check for jumps along the cardinal directions. To create a jump detector with respect to the first dimension, the one-dimensional filter is multiplied with each strand $\{\hat{f}_{ij}\}_{i=0}^{m_1-1}$ like in the case of the differentiation filters. Applying the 2D IFFT to these filtered coefficients produces a two-dimensional jump detector function where only jumps with respect to the first coordinate are considered. The same process with coordinate indexing switched produces an equivalent detector for jumps along the second dimension.

\section{Numerical studies}\label{sec:numerical}
To test these techniques, we compared our approach with the method of Yeh et al.~\cite{yeh2020knot}. We refer to this procedure as ``derivative-informed (DI) knot placement.'' In its original presentation, the DI method was shown to have comparable or superior accuracy to a number of other knot placement schemes, both iterative and direct~\cite{yeh2020knot}. Here we intend to show that our method is consistent with the DI method for smoothly periodic signals. We then exhibit a number of cases that the DI method is not designed to handle and show that our method performs as expected. For reference, we also compare the accuracy of our method with uniform knot placement. These results are marked ``uniform'' in the following plots. For each test, we report the RMS and maximum error of the resulting spline. The behavior of the maximum error is especially important when considering nonsmooth data, since spurious overshoots and undershoots increase maximum error dramatically but may be unnoticed when reporting RMS error alone.

Several combinations of the techniques presented in \cref{sec:method} were tested. We have abbreviated these combinations as follows: 
\begin{itemize}
	\item[] \textbf{Derivative-Informed, Fourier (DI-F)}. Knot locations are determined by the DI method, but high-order derivatives are computed from the Fourier spectrum as described in \cref{ssec:deriv-knots}.
	\item[] \textbf{Derivative-Informed, Fourier+Smooth (DI-FS)}. Knots are placed using the derivatives of a smooth proxy function; these derivatives are computed using the procedure in \cref{ssec:smooth-proxy}. 
	\item[] \textbf{Derivative-Informed, Fourier+Jumps (DI-FJ)}. Jump discontinuities in the data and its derivative are identified as in \cref{ssec:jump-detect} and high-multiplicity knots are placed accordingly. Then, the remainder of the knots are placed according to the DI-FS method.
\end{itemize}

All methods were implemented in MATLAB, using built-in routines for the FFTs and evaluation of B-spline basis functions. Control points for the B-spline were determined via least squares minimization; in cases where the Schoenberg-Whitney condition is not satisfied (that is, when the least squares system is underdetermined), we selected the minimum norm solution. Least squares minimization was carried out with the complete orthogonal decomposition built-in to MATLAB.

\subsection{Noisy Data}
With the application of smoothing filters, our method can automatically handle data corrupted by noise. In general, the computation of numerical derivatives with finite differences is extremely unstable when applied to noisy data. For this reason, data sets with noise were not considered as a feasible use case in the original presentation of the DI method.  However, for the sake of comparison, we considered the performance of the DI method and uniform knot placement when assessing the accuracy of our method with noisy data.

To study the effect of noise on the method, we perturbed a smooth function with various amounts of noise. B-spline accuracy was studied as a function of total knots, for tests with different noise levels. In each case, Gaussian white noise with standard deviation equal to one was scaled by a constant factor. These factors were 0.0001, 0.001, 0.01, and 0.1. The results of these tests are summarized in \cref{fig:noise-test}. For each noise level, we see that the DI-FS method outperforms both the DI and uniform knot placement methods by an order of magnitude for small knot numbers. Given enough knots, all methods approach the same accuracy, which is the noise level for that particular test. With the application of smoothing filters, the DI-FS method is effectively placing knots according to the large scale features of the data set, which is the most effective way to increase accuracy with a small number of knots. As a result, the DI-FS method produces B-spline approximations accurate up to the level of noise with approximately half the knots needed by the other methods.

\begin{figure}[h!]
	\centering
	\includegraphics[width=.48\textwidth]{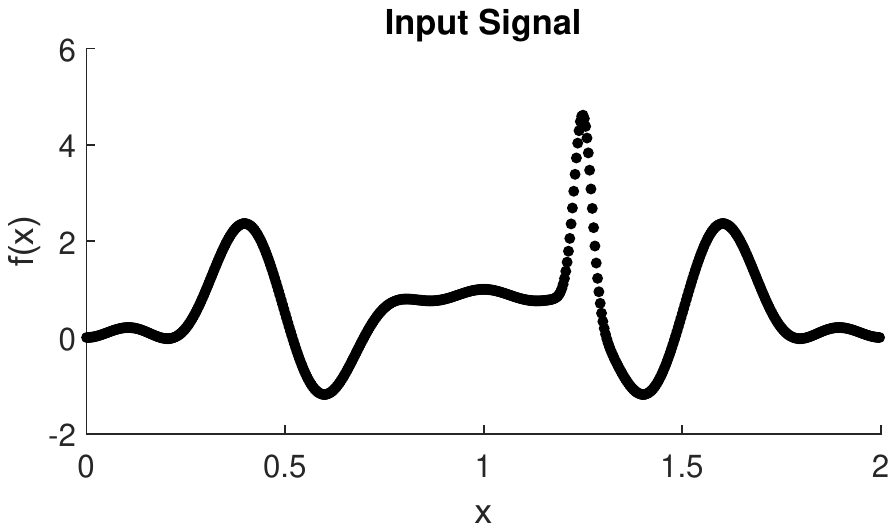} 
	\includegraphics[width=.48\textwidth]{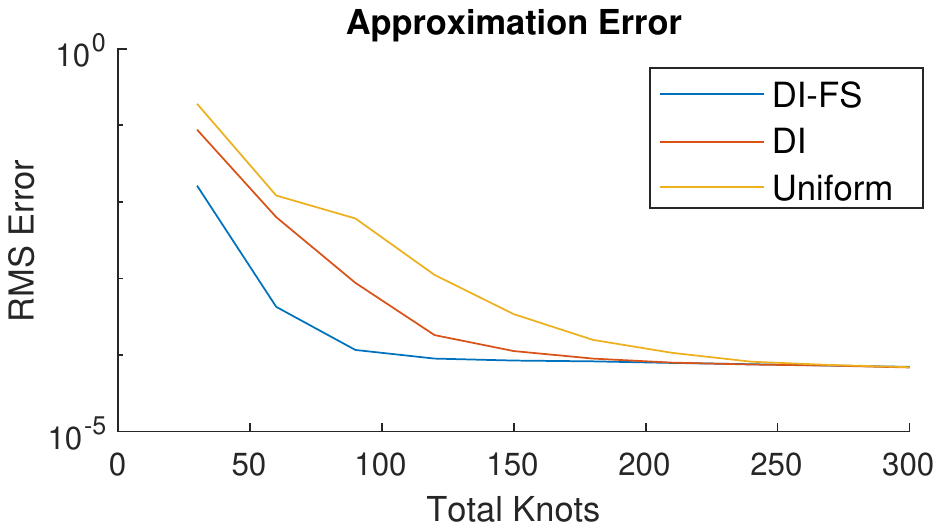} \\
	\includegraphics[width=.48\textwidth]{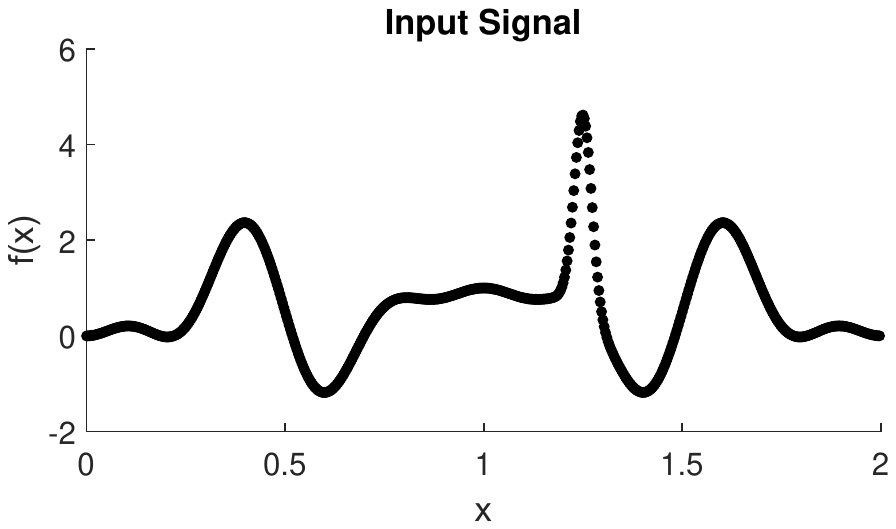} 
	\includegraphics[width=.48\textwidth]{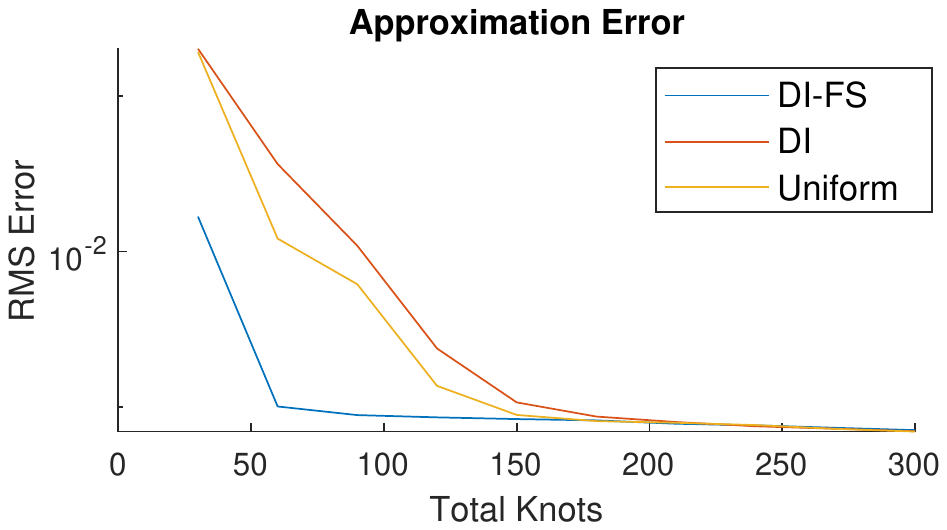} \\
	\includegraphics[width=.48\textwidth]{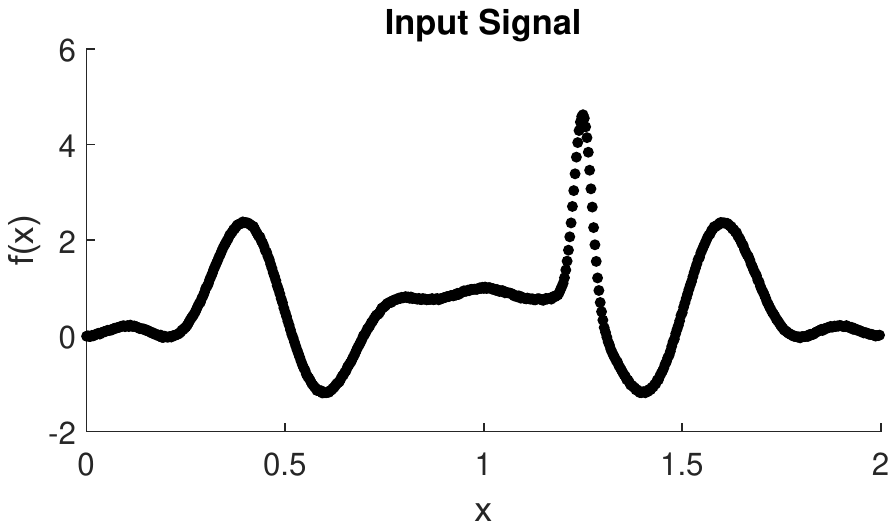}
	\includegraphics[width=.48\textwidth]{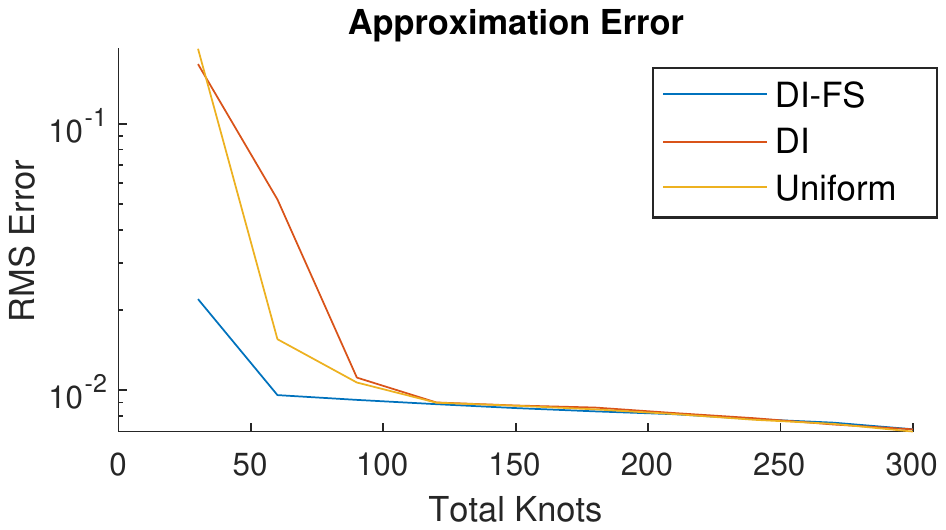} \\
	\includegraphics[width=.48\textwidth]{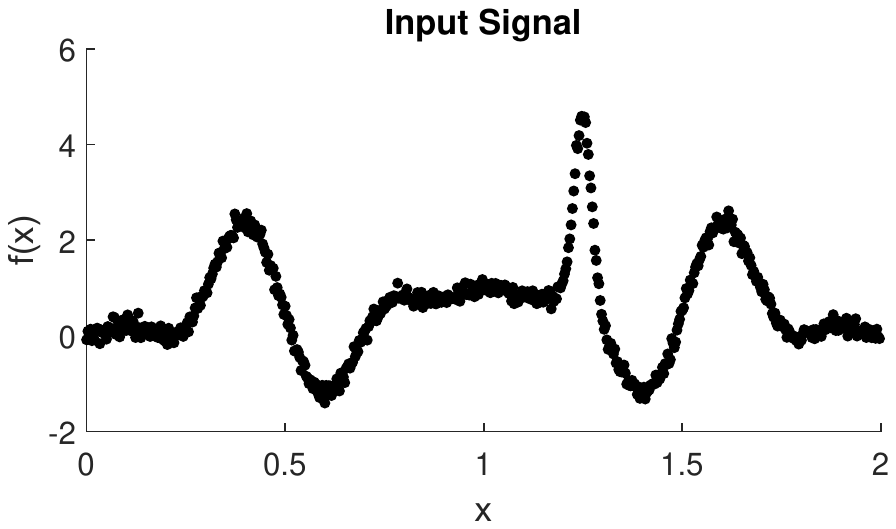} 
	\includegraphics[width=.48\textwidth]{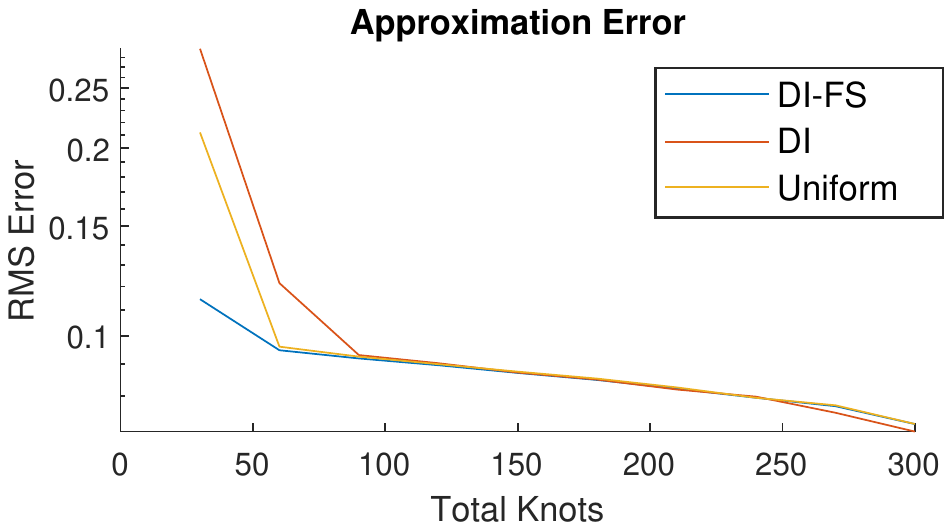}
	\caption{At left: Smooth data perturbed by increasing amounts of Gaussian white noise. Noise levels (from top to bottom) are: $10^{-4}$, $10^{-3}$, $10^{-2}$, $10^{-1}$. At right: Convergence of spline error as a function of total knots for each data set.}
	\label{fig:noise-test}
\end{figure}

\subsection{Piecewise-Smooth Data}
As we discussed in \cref{ssec:jump-detect}, attempting to fit a highly smooth spline to nonsmooth data is bound to produce an oscillatory approximation unless jumps in the data are treated carefully. To study the importance of jump detection for B-spline fitting, we tested the DI-FJ method on signals with a combination of $C^0$ and $C^1$ jumps and compared its performance with the DI method and uniform knot placement. This comparison contrasts the accuracy of using precisely-located high-multiplicity knots (our method) to the accuracy of using closely-packed simple knots (the DI method). In \cref{fig:test-jump}, we considered a signal with 600 input points containing one $C^1$ jump one-third of the way through the domain and one $C^0$ jump two-thirds of the way through the domain. For the purposes of the knot counts in \cref{fig:test-jump}, we consider one knot of multiplicity $q$ to be the same as $q$ simple knots. Using the two-pass strategy for detecting jumps, the DI-FJ pinpointed both jumps and placed high-multiplicity knots accordingly. Overall, we observe the DI-FJ method is more accurate than the DI method by an order of magnitude or more when the total number of knots is low. As we would expect from a method which corrects spurious oscillations, the increase in accuracy is especially significant in the maximum error norm. By targeting $C^0$ and $C^1$ jumps in the data, the DI-FJ method avoids the largest sources of error with a small amount of knots.
\begin{figure}
	\centering
	\includegraphics[width=.32\textwidth]{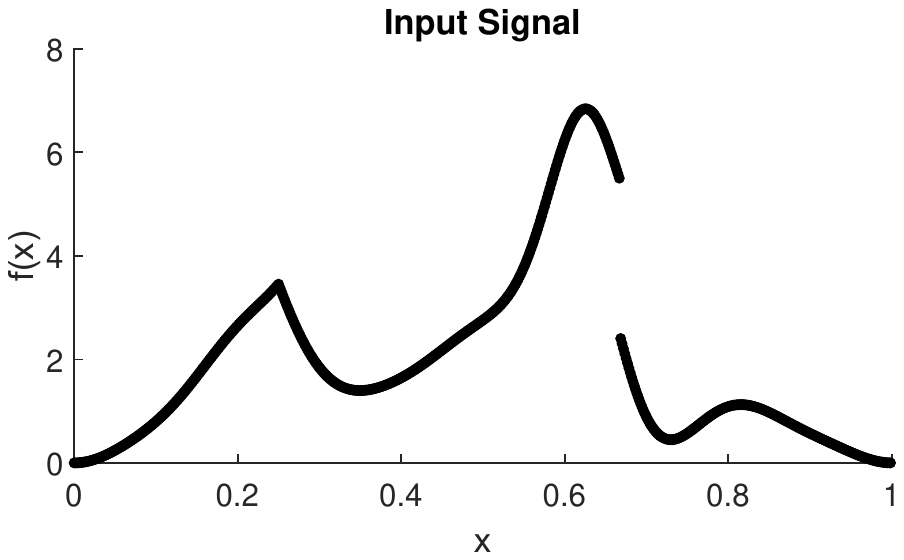}
	\includegraphics[width=.33\textwidth]{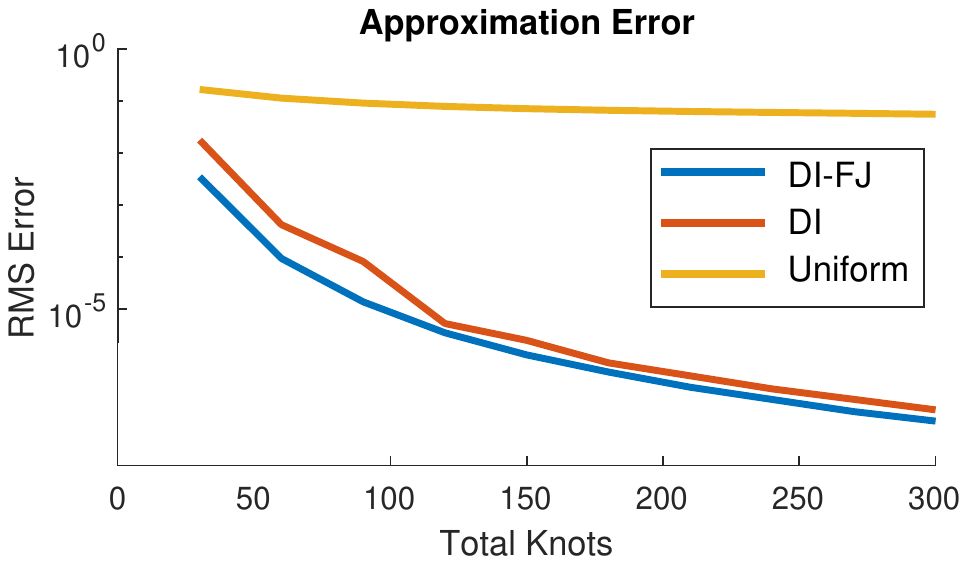}
	\includegraphics[width=.33\textwidth]{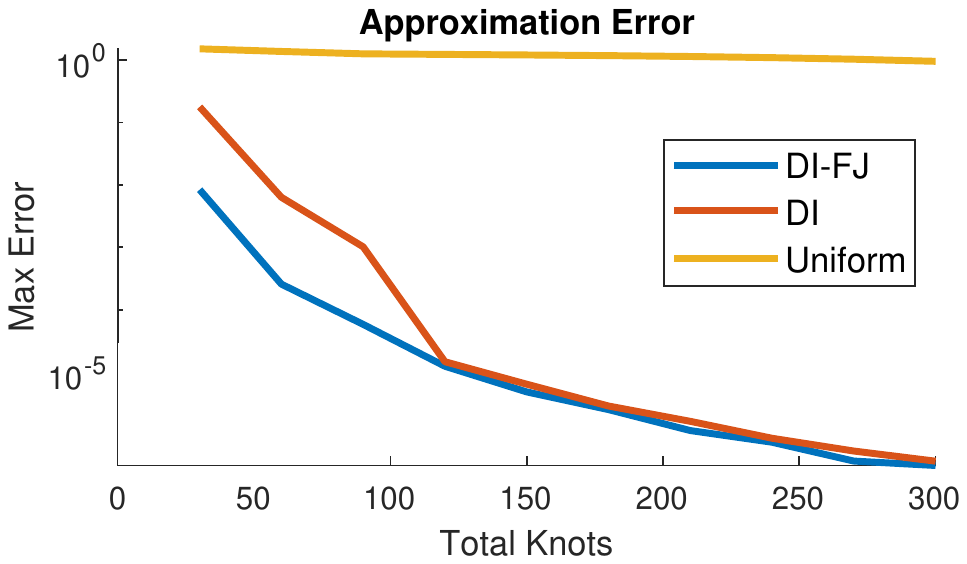}
	\vspace{-3ex}
	\caption{Comparison of the DI-FJ method to the DI method and uniform knot placement.}
	\label{fig:test-jump}
\end{figure}

\subsection{Two-Dimensional Data}
To validate the method in two dimensions, we tested our procedure on two data sets. 
\begin{itemize}
    \item \textbf{SPH Analytical:} An analytical, smooth signal generated from a combination of spherical harmonics modes ($Y^3_2 + Y^3_3$), which is doubly periodic.
    \item \textbf{Global Topography:} A spherical harmonic sampled expansion of composite satellite observation that integrates land topography and ocean bathymetry data over the entire Earth's surface, as provided by ETOPO1 Global Relief Model \cite{global-topo}
\end{itemize}

\subsubsection{Smooth Data}
When considering the first data set, we expect uniform knots will achieve near-optimal accuracy due to the highly smooth nature of the data. This is confirmed by our test: when placing knots according to the DI-F method, the resulting knot vector is close to uniform and the spline approximation error is almost identical to the splines constructed from uniform knots. \Cref{fig:2d-harmonic}  compares spline approximation error with derivative-informed and uniform knots when the approximating spline is fourth order (i.e., cubic).

\begin{figure}[h!]
	\centering
	\includegraphics[width=.3\textwidth]{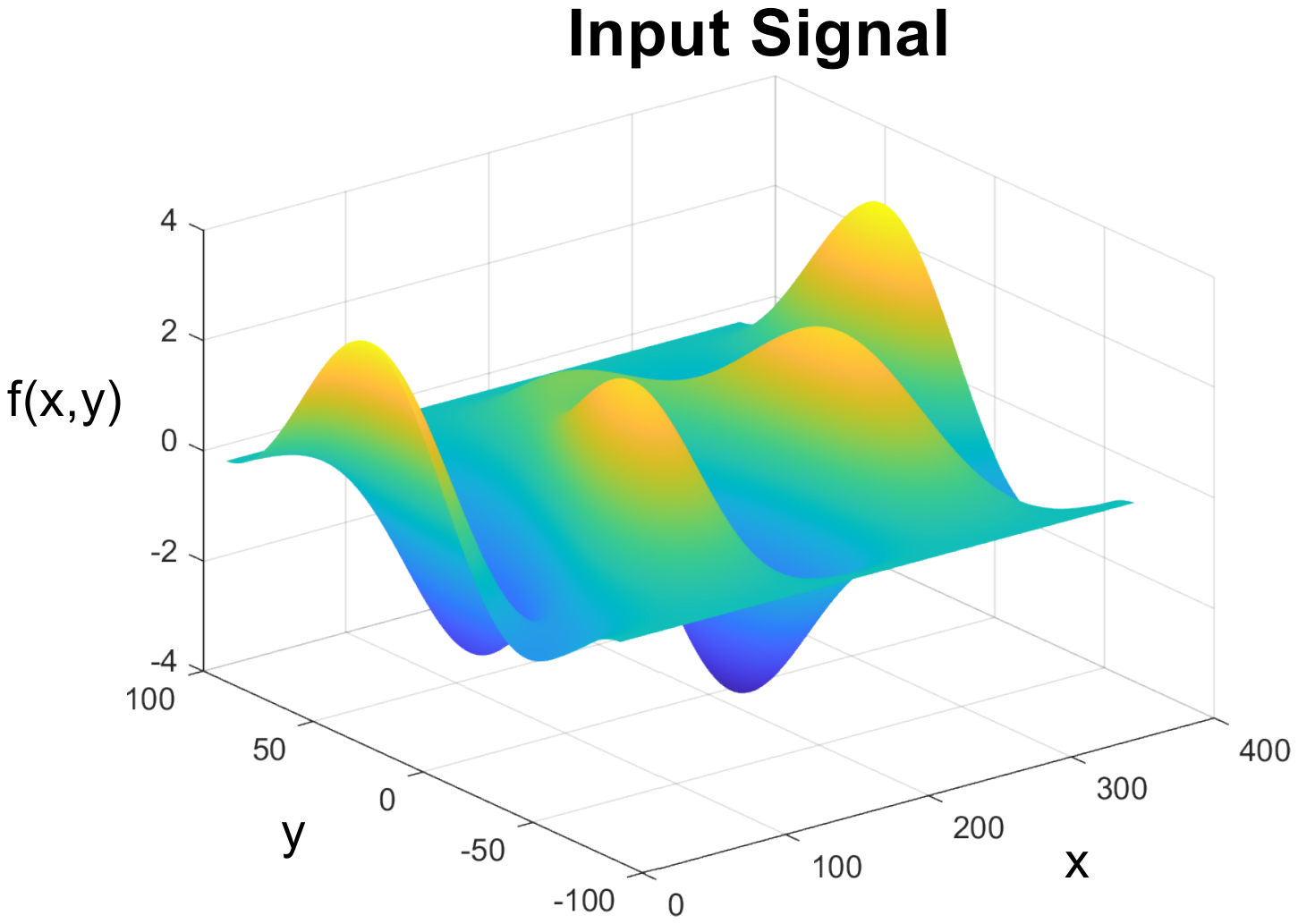} \hspace{.03\textwidth}
	\includegraphics[width=.3\textwidth]{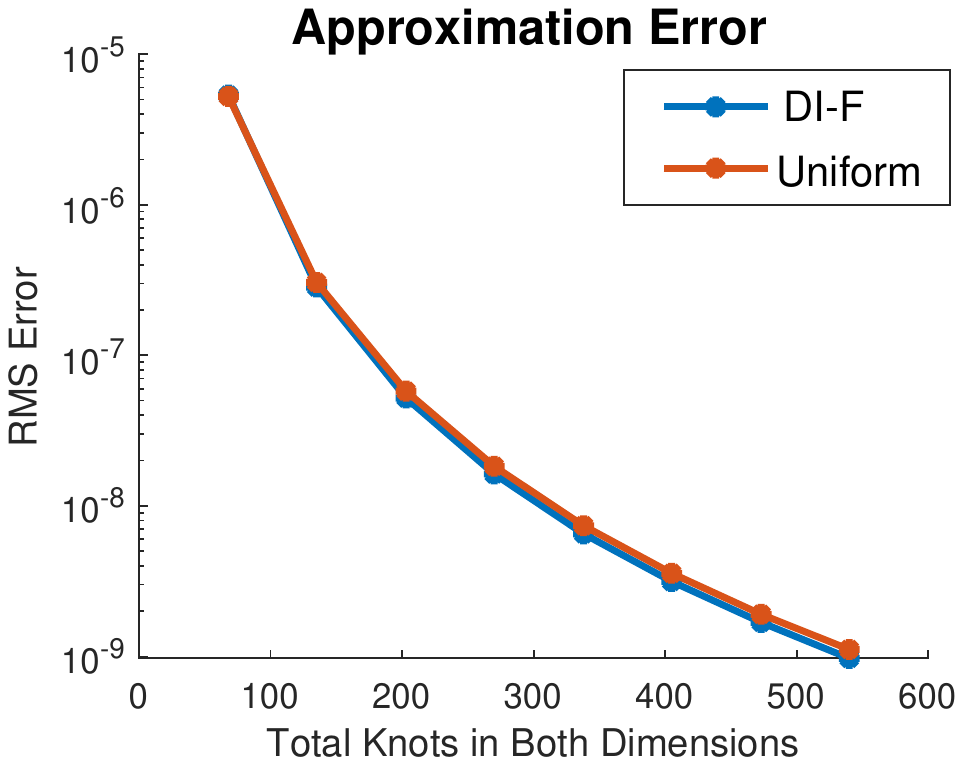} \hspace{.03\textwidth}
	\includegraphics[width=.3\textwidth]{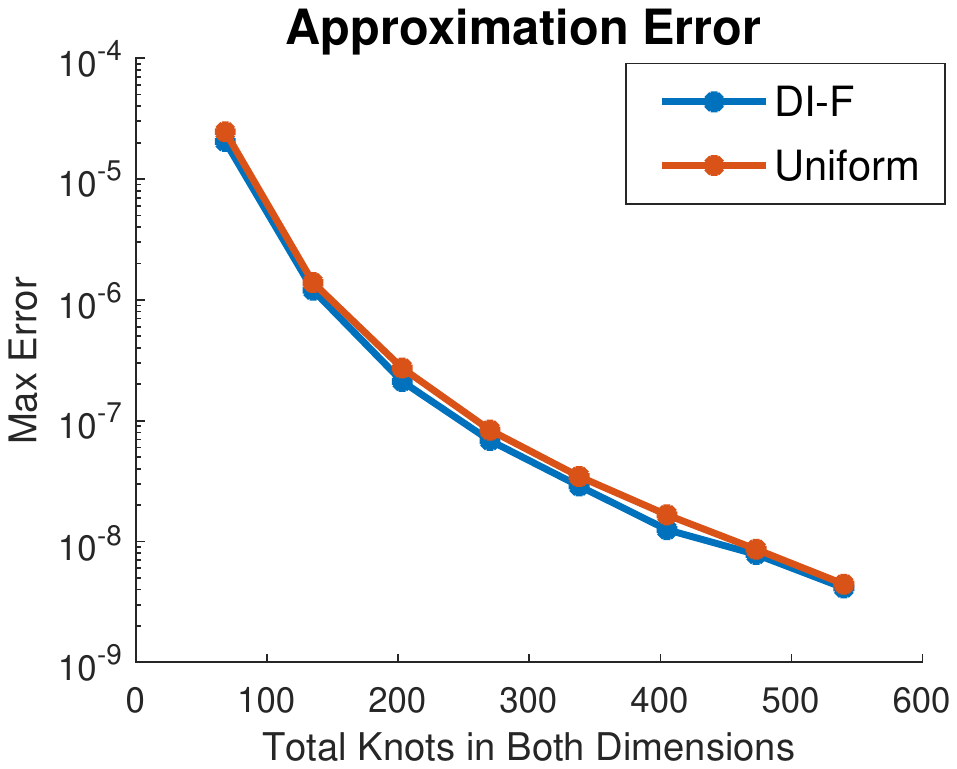}
	\caption{Comparison of spline approximation error as a function of knots for the SPH Analytical data set. Throughout the study, the ratio of knots in the first dimension to knots in the second dimension was kept constant.}
	\label{fig:2d-harmonic}
\end{figure}

\subsubsection{Non-smooth Data}
In the second test, the performance of the method was measured for the real-world field data set that represents the Earth's global topography. This data set was sampled on a uniform grid with a resolution of $720\times 360$, and retains periodicity only in the first dimension (X) and not in the second (Y). As a consequence, we computed the derivatives in the x-direction from the Fourier spectrum, while derivatives in the y-direction were computed using finite differences. Knot locations were then chosen in the manner described by \cref{ssec:2d-method}.

The results of this test are summarized in \cref{fig:2d-climate}. Here, we compare the error produced by a spline with uniformly spaced knots with the error produced by a spline with knots chosen by the DI-F method. The DI-F method outperforms uniform knot placement by approximately one order of magnitude for moderate knot counts. 

\begin{figure}[h!]
	\centering
	\includegraphics[width=.3\textwidth]{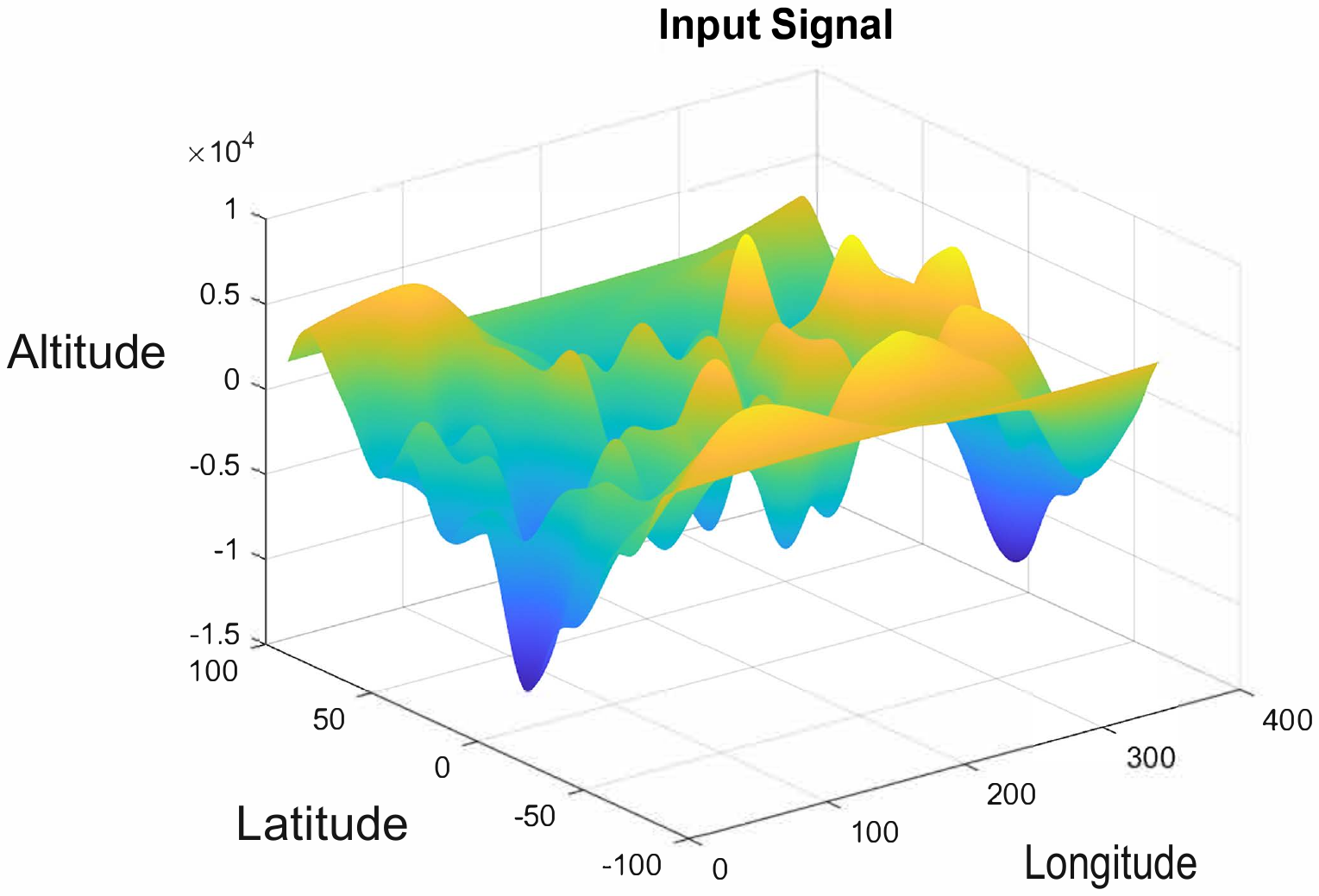}\hspace{.03\textwidth}
	\includegraphics[width=.3\textwidth]{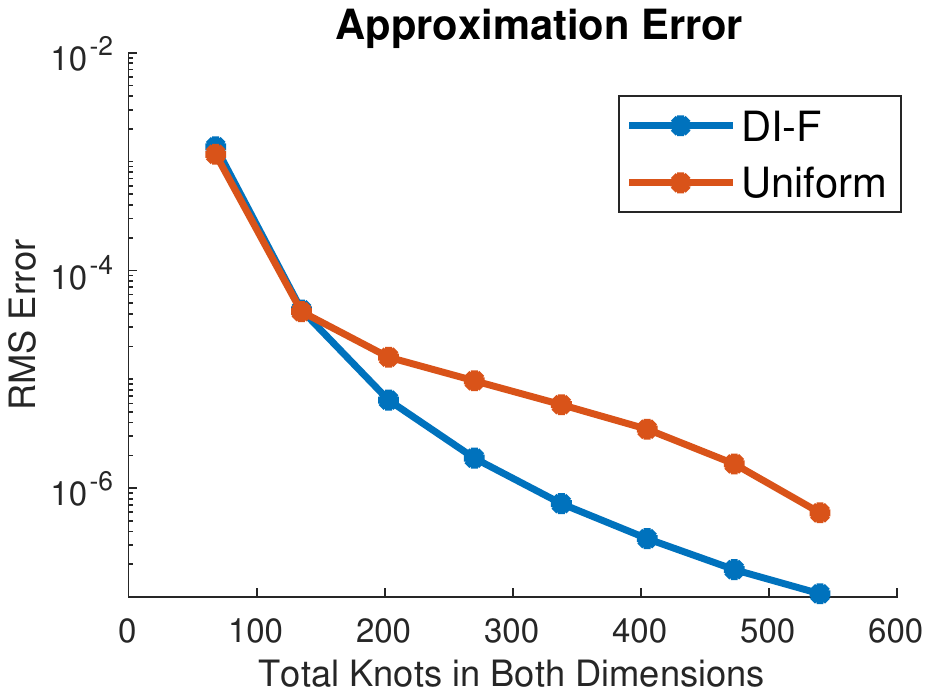}\hspace{.03\textwidth}
	\includegraphics[width=.3\textwidth]{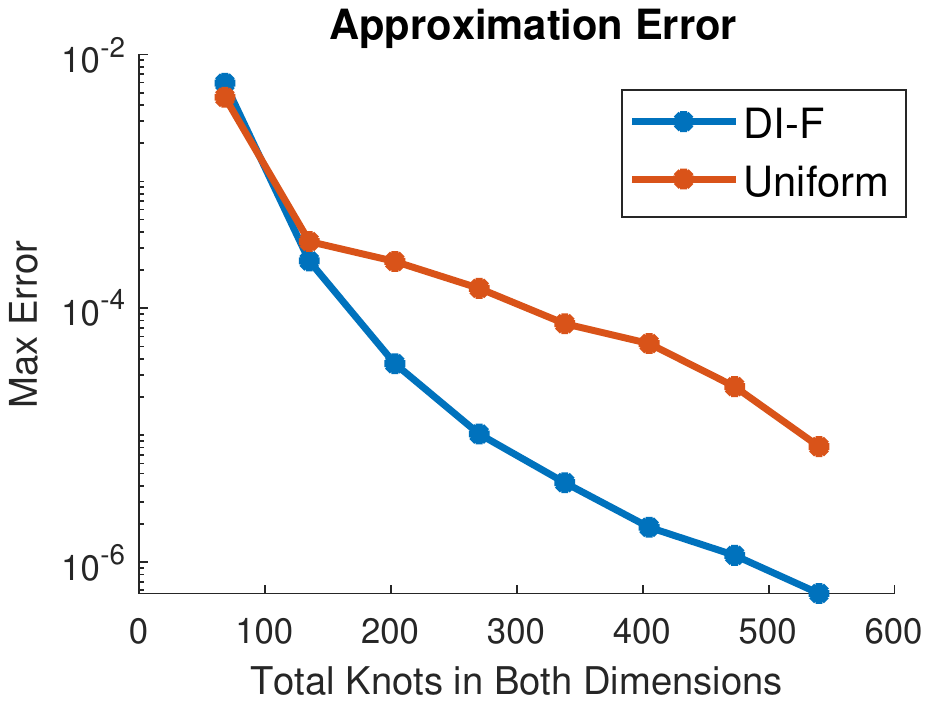}
	\caption{Comparison of spline approximation error as a function of knots for the Global Topography data set. Throughout the study, the ratio of knots in the first dimension to knots in the second dimension was kept constant.}
	\label{fig:2d-climate}
\end{figure}

\section{Future Work}\label{sec:future}
Having set the theoretical groundwork for a Fourier-informed approach to B-spline fitting, it is essential to note that the B-spline data models presented here need not be restricted to periodic signals. If the data are already represented on an equidistant grid but are not, then strategies for periodic continuation~\cite{albin_pathmanathan_2014, lyon2011continuation} should be considered. If the data are scattered, a naive approach would be to interpolate on an equidistant grid, which may add computational overhead to the FFT; however, the non-uniform FFT (NUFFT)~\cite{greengard2004} is recommended to preserve the efficiency of the FFT. If the data set is prohibitively large, it is is preferable to consider block transforms into spectral space, which can be achieved through the discrete Chebyshev transform (DCT)~\cite{boyd2001} or the discrete Legendre transform (DLT)~\cite{marin2016}. 
Once these extensions are studied, the methods described in this paper can be extended to more complex cases, e.g. large-scale scattered and nonperiodic data.

\section{Conclusions} \label{sec:conclusion}
We described a fast and scalable method to choose knot locations for B-spline approximations of periodic signals based on Fourier coefficients and spectral filters. Our method accounts for noise, jump discontinuities, and high-order derivatives of the signal when choosing knot locations. As a result, the method can place high-multiplicity knots where a signal contains jumps in its value or its derivative, eliminating Gibbs-like overshoots and undershoots near the jump. The method can also compute approximate derivatives in the presence of noise. As a result, our knot placement algorithm efficiently uses the knots available to it and is more accurate than a competing method when B-splines are constructed from a small number of knots.

In addition to performing well in several challenging settings, our knot placement scheme is designed to be fast and flexible. All computations necessary to place knots are carried out in Fourier space by filtering the Fourier spectrum of the input in various ways. This allows different filters to be combined easily and makes the method extensible to additional filters in the future. Furthermore, the computational complexity of applying these filtering operations is $O(N\log N)$ and highly scalable, making the implementation suitable to tackle large data sets often encountered in scientific applications.

\section*{Acknowledgments}
This work is supported by the
U.S. Department of Energy, Office of Science, Advanced Scientific Computing Research under Contract DE-AC02-06CH11357, and the Exascale
Computing Project (Contract No. 17-SC-20-SC), a collaborative effort of the U.S. Department of Energy Office of Science and the National Nuclear Security Administration.

\bibliographystyle{siamplain}
\bibliography{mfa}

\pagebreak
The submitted manuscript has been created by UChicago Argonne, LLC, Operator of Argonne National Laboratory ("Argonne”). Argonne, a U.S. Department of Energy Office of Science laboratory, is operated under Contract No. DE-AC02-06CH11357. The U.S. Government retains for itself, and others acting on its behalf, a paid-up nonexclusive, irrevocable worldwide license in said article to reproduce, prepare derivative works, distribute copies to the public, and perform publicly and display publicly, by or on behalf of the Government. The Department of Energy will provide public access to these results of federally sponsored research in accordance with the DOE Public Access Plan (\url{http://energy.gov/downloads/doe-public-access-plan}).

\end{document}